\renewcommand{\mathcal}{\mathscr}
\theoremstyle{definition} 
\newtheorem{theorem}{Theorem}[section]
\newtheorem{lemma}[theorem]{Lemma}
\newtheorem{remark}[theorem]{Remark}
\title[CLT for the elephant random walk]{Rate of moment convergence in the central limit theorem for the elephant random walk}
\thanks{M.H. is partially supported by JSPS KAKENHI Grant Numbers JP19H01791 and JP21K03272. M.T. is partially supported by JSPS KAKENHI Grant Numbers JP19H01793, JP19K03514, and JP22K03333.
}
\author{Masafumi Hayashi}
\address{Department of Mathematical Sciences, University of the Ryukyus, Okinawa, Japan}
\email{hayashi@math.u-ryukyu.ac.jp}
\author{So Oshiro}
\address{Department of Mathematical Sciences,  Graduate School of Engineering and Science, University of the Ryukyus, Okinawa, Japan}
\author{Masato Takei}
\address{Department of Applied Mathematics, Faculty of Engineering, Yokohama National University, Yokohama, Japan}
\email{takei-masato-fx@ynu.ac.jp}
\begin{document}

\begin{abstract}
The one-dimensional elephant random walk is a typical model of discrete-time random walk with step-reinforcement, and is introduced by Sch\"{u}tz and Trimper (2004). It has a parameter $\alpha \in (-1,1)$: The case $\alpha=0$ corresponds to the simple symmetric random walk, and when $\alpha>0$ (resp. $\alpha<0$), the mean displacement of the walker at time $n$ grows (resp. vanishes) like $n^{\alpha}$. The walk admits a phase transition at $\alpha=1/2$ from the diffusive behavior to the superdiffusive behavior.
In this paper, we study the rate of the moment convergence in the central limit theorem for the position of the walker when $-1 < \alpha \leq 1/2$. We find a crossover phenomenon in the rate of convergence of the $2m$-th moments with $m=2,3,\ldots$ inside the diffusive regime $-1<\alpha<1/2$. 
\end{abstract}

\maketitle

\section{Introduction}
\label{intro}

Stochastic processes with long-memory appear often in the various fields of applied mathematics and statistical mechanics. 
The elephant random walk (ERW), which is introduced by Sch\"{u}tz and Trimper \cite{SchutzTrimper04}, is known as one of the simplest such models. It is a discrete-time, nearest-neighbor random walk on integers $\mathbb{Z}$ with complete memory of its whole past.
It starts at the origin, and at time $1$ it moves to the right with probability $q \in [0,1]$ and to the left with probability $1-q$. At time $n+1$, it selects an integer $U_n$ uniformly at random from $\{1,\cdots,n\}$, and moves exactly in the same direction (resp. in the opposite direction) as that of time $U_n$ with probability $p \in (0,1)$ (resp. $1-p$).
The main interest of the study of the ERW is to investigate the long term memory effects.
In \cite{SchutzTrimper04}, it is shown that the ERW exhibits both normal and anomalous (super) diffusion, depending on the memory parameter $p$. Last few years, many authors have studied several limit theorems describing how the memory influences the asymptotic behavior of the ERW (see \cite{BaurBertoin16,Bercu18,Bertoin22Counting,Collettietal17a,Collettietal17b,FanHuMa21,KubotaTakei19JSP,MaElMachkouriFan22} and references therein).

Let $S_n$ denote the position of the ERW at time $n$.
In the case $p=q=1/2$, there is no memory effect and the ERW behaves just like the simple symmetric random walk. The results on the limiting distribution of the ERW, obtained in \cite{BaurBertoin16,Bercu18,Collettietal17a,Collettietal17b}, are summarized as follows: In the long run, the ERW with $p \in (0,3/4)$ becomes similar to the symmetric random walk in that the distribution of $S_n/\sqrt{n}$ converges to a Gaussian distribution. On the other hand, for $p \in (3/4,1)$, $S_n/n^{2p-1}$ converges to a nondegenerate random variable $L$ with probability one, where the distribution of $L$ is not Gaussian. In the critical case $p=3/4$, the distribution of $S_n/\sqrt{n \log n}$ converges to the standard Gaussian distribution. 

The central limit theorem for the diffusive regime $p\in(0,3/4)$ shows that the ERW is qualitatively similar to the ordinary random walk.
Quite recently, more quantitative forms of the central limit theorem for $p \in (0,3/4]$ are obtained by \cite{FanHuMa21,MaElMachkouriFan22}. 
Their results suggest that there is a transition of the convergence rate at $p=1/2$.
However, the results obtained in \cite{FanHuMa21,MaElMachkouriFan22} are about the change of {\it upper bounds} for the accuracy of the normal approximation.
This motivates the study in the current paper. The moment method, which is to understand the distribution of a random variable $X$ via its moments $E[X^k]$ $(k=1,2,\cdots)$, is often useful even in non-independent settings (see e.g. Section 30 of \cite{Billingsley12PMAnniversary}, or Section 2.2.3 in \cite{Tao12AMSBook}).
In this paper, we derive the {\it exact} rate of moment convergence in the central limit theorem for $\{S_n\}$ for $p \in (0,3/4]$. In particular, as a consequence of our main results, the rate of convergence of the odd order moments and of the second moment becomes small as $p$ increases, while there is a transition at $p=1/2$ about the rate of convergence of the $2m$-th moments with $m \geq 2$.

In the study of the ERW, it is often convenient to use the shifted memory parameter $\alpha:=2p-1$. The memoryless case is $\alpha=0$. The dynamics of the ERW can be rephrased as follows (see Section II in \cite{Kursten16}): If $\alpha>0$ (resp. $\alpha<0$), then with probability $\alpha$ (resp. $-\alpha$) the ERW moves in the same direction (resp. in the opposite direction) as it did in the past, and with probability $1-\alpha$ (resp. $1+\alpha$) it will go right or left with equal probability. The case $\alpha>0$ is called ``step-reinforcing", and the case $\alpha<0$ is called ``counterbalancing" (\cite{Bertoin20Counterbalancing,Bertoin21Universality}).
The main results in this paper suggest that the counterbalance does not make the convergence in the central limit theorem faster than the simple random walk, while the step-reinforcement makes it slower. This sheds some lights on the difference of the memory effects between those two cases.

The rest of the paper is organized as follows. The precise definition of the ERW, together with fundamental results, is given in Section \ref{sec:model}. Our main results are presented in Section \ref{sec:MainResults}. Our proofs are based on recursions for the moments of $S_n$ (see \eqref{eq:Oshiro-odd} and \eqref{eq:Oshiro-even} below). The proof of the result for the odd order moments in Section \ref{sec:thmHOT:OddMoments} reveals the skeleton of our strategy to obtain the exact convergence rate of moments, and is helpful for understanding the more complicated proofs needed for the even order moments, given in Sections \ref{sec:thmHOT:EvenMoments} and \ref{sec:ThmEven(iii)}.

\section{The elephant random walk and central limit theorems} \label{sec:model}

The elephant random walk is defined as follows: Let $p \in (0,1)$ and $q \in [0,1]$.
\begin{itemize}
\item The first step $X_1$ of the walker is $+1$ with probability $q$, and $-1$ with probability $1-q$.
\item For each  $n\in \mathbb{N} := \{1,2,\ldots\}$, let $U_n$ be uniformly distributed on $\{1,\ldots,n\}$, and
\begin{align*}
X_{n+1} &=  \begin{cases}
X_{U_n} &\mbox{with probability $p$}, \\
-X_{U_n} &\mbox{with probability $1-p$}. \\
\end{cases}
\end{align*} 
\end{itemize}
Each of choices in the above procedure is made independently. 
The sequence $\{X_i\}$ generates a one-dimensional random walk $\{S_n\}$ by
\[ S_0:=0,\quad \mbox{and} \quad S_n= \sum_{i=1}^n X_i \quad \mbox{for $n\in \mathbb{N}$.} \]
The parameter $p$ is referred to the memory parameter. 
When $p=1/2$,  the memory does not  affect the decision of the walker at all,  and it is essentially the symmetric random walk. 
For $p>1/2$ (resp. $p<1/2$), the walker is preferentially doing the same (resp. opposite) as in the past. 
The new parameters defined by 
\[ \alpha:=2p-1 \in (-1,1) \quad \mbox{and} \quad \beta:=E[X_1]=2q-1\in [-1,1] \]
will be convenient later. Let $\mathcal{F}_n$ be the $\sigma$-algebra generated by $X_1,\ldots,X_n$. 
For  $n \in \mathbb{N}$, the conditional distribution of $X_{n+1}$ given the history up to time $n$ is
\begin{align}
 &P(X_{n+1}= \pm 1 \mid \mathcal{F}_n) \notag \\
 &= \dfrac{\#\{i=1,\ldots,n : X_i=\pm 1\}}{n} \cdot p+  \dfrac{\#\{i=1,\ldots,n : X_i= \mp 1\}}{n} \cdot (1-p) \notag \\
& = \dfrac{n \pm S_n}{2n} \cdot p+   \dfrac{n \mp S_n}{2n} \cdot (1-p) = \dfrac{1}{2} \left( 1 \pm \alpha \cdot \dfrac{S_n}{n} \right).
\label{eq:elephantRWCondDistp}
\end{align}
The conditional expectations of $X_{n+1}$ and $S_{n+1}$ are 
\begin{align}
	 E[X_{n+1} \mid \mathcal{F}_n]&= \alpha \cdot \dfrac{S_n}{n}, \label{eq:CondExpect}\\
\intertext{and}
	E[S_{n+1} \mid \mathcal{F}_n]&=\left(1+\dfrac{\alpha}{n}\right) S_n. \label{eq:CondExpect_bis}
\end{align}
We set
\begin{align}
a_n:=\prod_{j=1}^{n-1} \left(1+\dfrac{\alpha}{j}\right) = \prod_{j=1}^{n-1} \dfrac{j+\alpha}{j} = \dfrac{\Gamma(n+\alpha)}{\Gamma(n)\Gamma(1+\alpha)}. \label{eq:Defan}
\end{align}
Stirling's formula yields that for $a,b \in \mathbb{R}$,
\begin{align}
		\dfrac{\Gamma(n+a)}{\Gamma(n+b)} \sim n^{a-b} \qquad (n \to \infty), \label{Asymp:RatioOfGammas}
\end{align}
and
\begin{align}
a_n \sim \dfrac{n^{\alpha}}{\Gamma(1+\alpha)} \qquad (n \to \infty),
\label{eq:Asympan}
\end{align}
where $x_n \sim y_n$ means that $x_n/y_n$ converges to $1$ as $n \to \infty$.

Let 
\[ M_n:=\dfrac{S_n}{a_n} \qquad \mbox{for $n \in \mathbb{N}$}. \]
Then $\{M_n\}$ is a martingale with respect to $\{\mathcal{F}_n\}$. 
As $E[M_1] = E[S_1]=\beta$, we have
\begin{align}
E[S_n]=\beta a_n \qquad \mbox{for any $n \in \mathbb{N}$}. 
\label{ElephantRWSnasymp}
\end{align}
By \eqref{eq:Asympan}, the mean displacement is asymptotically of order $n^{\alpha}$, provided that $\beta \neq 0$.

%
%

Sch\"{u}tz and Trimper \cite{SchutzTrimper04} show that there are two distinct (diffusive/super-diffusive) regimes about the asymptotic behavior of the mean square displacement: By a calculation found in page 12 of \cite{Bercu18}, 
\begin{align}
E[(S_n)^2] &= \dfrac{\Gamma(n+2\alpha)}{\Gamma(n)} \sum_{\ell=1}^n  \dfrac{\Gamma(\ell)}{\Gamma(\ell+2\alpha)} \notag \\ 
&=
\begin{cases}
\dfrac{n}{1-2\alpha} + \dfrac{1}{2\alpha-1} \cdot \dfrac{\Gamma(n+2\alpha)}{\Gamma(n)\Gamma(2\alpha)}  &(\alpha \neq 1/2), \\
\displaystyle n\sum_{\ell=1}^n \dfrac{1}{\ell}&(\alpha=1/2).
\end{cases}
\label{eq:E[S_n^2]exact}
\end{align}
Here we regard
\begin{align}
\dfrac{1}{\Gamma(s)} =0\qquad \mbox{for $s=0,-1,-2,\ldots$}.
\label{eq:GammaSingular}
\end{align}
By \eqref{Asymp:RatioOfGammas},
\begin{align}
	E[(S_n)^2] 
\sim 
\begin{cases}
\dfrac{n}{1-2\alpha} &(-1<\alpha<1/2), \\[3mm]
n\log n &(\alpha=1/2), \\[1mm]
\dfrac{n^{2\alpha}}{(2\alpha-1)\Gamma(2\alpha)}  &(1/2<\alpha<1).
\end{cases}
\label{eq:ElephantRWSn2Asymp}
\end{align}
After their study, 
several limit theorems describing the influence of the memory parameter $p$ 
have been studied by many authors  \cite{BaurBertoin16,Bercu18,Collettietal17a,Collettietal17b,KubotaTakei19JSP}. 
%
When $-1<\alpha < 1/2$, the elephant random walk is diffusive, and the fluctuation is Gaussian: 
\begin{align}
 \dfrac{S_n}{\sqrt{n/(1-2\alpha)}} \stackrel{\text{d}}{\to} N(0,1) \quad (n \to \infty).
 \label{eq:ERWCLT<1/2}
\end{align}
where $\stackrel{\text{d}}{\to}$ denotes the convergence in distribution, and $N(0,1)$ is the standard normal distribution. 
When $\alpha = 1/2$, the walk is marginally superdiffusive, but still 
\begin{align}
 \dfrac{S_n}{\sqrt{n \log n}} \stackrel{\text{d}}{\to} N(0,1) \quad (n \to \infty).
 \label{eq:ERWCLT=1/2}
\end{align}
On the other hand, if $\alpha>1/2$, then $\{M_n\}$ is an $L^2$-bounded martingale, and there exists a random variable $M_{\infty}$ such that $P(M_{\infty} \neq 0)>0$ and
\begin{align}
M_n=\dfrac{S_n}{a_n} \to M_{\infty}\quad \mbox{a.s. and in $L^2$ as $n \to \infty$.}
\label{conv:M_nL2}
\end{align}
Although $M_{\infty}$ is non-Gaussian,
\begin{align}
\dfrac{S_n-M_{\infty} \cdot a_n}{\sqrt{n/(2\alpha-1)}} \stackrel{\text{d}}{\to} N(0,1)\quad (n \to \infty).
 \label{eq:ERWCLT>1/2}
\end{align}
%


The rate of convergence in the central limit theorem for $\{S_n\}$ when $-1<\alpha \leq 1/2$ is studied in Fan, Hu, and Ma \cite{FanHuMa21}. 
Let $Z$ be a random variable with the standard normal distribution, and for another random variable $X$, we define 
\[
D(X) := \sup_{x \in \mathbb{R}} |P(X \leq x) - P(Z \leq x)|.
\]
The following Berry--Esseen type bound is a consequence of Corollary 1 in \cite{FanHuMa21} (see Appendix \ref{appendix:FanHuMa21Cor1}):
\begin{equation} \label{eq:FanHuMa21Cor1improved}
\begin{split}
D\left(\dfrac{S_n}{\sqrt{n/(1-2\alpha)}}\right) &\leq 
\begin{cases}
\dfrac{C_{\alpha} \log n}{\sqrt{n}} & (-1<\alpha\leq 0), \\[4mm]
\dfrac{C_{\alpha} \log n}{n^{(1-2\alpha)/2}} & (0\leq \alpha<1/2), \\[4mm]
\end{cases} \\
D\left(\dfrac{S_n}{\sqrt{n\log n}}\right) &\leq \dfrac{C_{1/2} \log \log n}{\sqrt{\log n}}  \quad (\alpha=1/2).
\end{split}
\end{equation}
There is a crossover phenomenon in the {\it upper bounds} at $\alpha=0$. A similar result on Wasserstein-$1$ distance is obtained by Ma, El Machkouri, 
and Fan~\cite{MaElMachkouriFan22}.
Our main results in the next section give the {\it exact} rate of moment convergence in the central moment limit theorem, which describe such transition more clearly.

\section{Main results} \label{sec:MainResults}

We denote by $\mu_k$ the $k$-th moment of the standard normal distribution: 
\[
	\mu_k := \begin{cases}
			0 &(k=2m-1), \\
			(2m-1)!!:=\prod_{j=1}^m (2j-1) &(k=2m),
\end{cases} 
\quad ( m \in \mathbb{N}).
\]
%
%
We show the following moment central limit theorems, which are stronger than  \eqref{eq:ERWCLT<1/2} and \eqref{eq:ERWCLT=1/2} (see Section 30 of \cite{Billingsley12PMAnniversary}, or Section 2.2.3 in \cite{Tao12AMSBook}): If $-1<\alpha<1/2$, then
\begin{align}
 \lim_{n \to \infty} E\left[\left(\dfrac{S_n}{\sqrt{n/(1-2\alpha)}}\right)^k\right] = \mu_k\quad (k \in \mathbb{N}),
\label{eq:ERWmomentCLT<1/2}
\end{align}
and if $\alpha=1/2$, then
\begin{align}
 \lim_{n \to \infty} E\left[\left(\dfrac{S_n}{\sqrt{n \log n}}\right)^k\right] = \mu_k\quad (k \in \mathbb{N}).
\label{eq:ERWmomentCLT=1/2}
\end{align}
Moreover, the {\it exact} rate of convergence in  \eqref{eq:ERWmomentCLT<1/2} and \eqref{eq:ERWmomentCLT=1/2}
are derived.
In particular, as a consequence of our main result, the rate of convergence of the odd order moments and of the second moment becomes 
small as $\alpha$ increases, while there is a transition at $\alpha=0$ about the rate of convergence of the $2m$-th moment with $m \geq 2$.



Our first result is about the odd order moments, which describe the asymmetry of the distribution and reflect the effect of the bias $\beta=2q-1$ of the first step remaining at time $n$. (It follows from (\ref{eq:elephantRWCondDistp}) that if $\beta  =2q-1 = 0$, then $\{S_n\}$ has the same law as the process $\{-S_n\}$, and we have $E[(S_n)^{2m-1}] = 0$ for any $m,n \in \mathbb{N}$.) The next theorem shows that the effect of the bias of the first step is vanishing more rapidly as $\alpha$ decreases.

\begin{theorem} \label{thmHOT:OddMoments} Assume that $\beta=2q-1 \neq 0$. 
		
\noindent (i) If $-1<\alpha<1/2$, then we have, as $n \to \infty$, 
			\begin{align}
 				E\left[\left(\dfrac{S_n}{\sqrt{n/(1-2\alpha)}}\right)^{2m-1}\right] 
					\sim \dfrac{\beta \sqrt{1-2\alpha}}{\Gamma(1+\alpha)} \cdot (2m-1)!! \cdot n^{-(1-2\alpha)/2}.
 				\label{eq:ERWCLTRate<1/2Odd}
			\end{align}
\noindent (ii) If $\alpha=1/2$, then we have, as $n \to \infty$,
			\begin{align}
 				E\left[\left(\dfrac{S_n}{\sqrt{n \log n}}\right)^{2m-1}\right] \sim \dfrac{2\beta}{\sqrt{\pi}}\cdot (2m-1)!! \cdot  (\log n)^{-1/2}.
 			\label{eq:ERWCLTRate=1/2Odd}
			\end{align}
\end{theorem}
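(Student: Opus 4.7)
The plan is to set up a recursion for the odd moments $T_n^{(2m-1)} := E[(S_n)^{2m-1}]$ and solve it asymptotically by induction on $m$. Since $X_{n+1} \in \{-1,+1\}$, one has $E[X_{n+1}^j \mid \mathcal{F}_n] = \alpha S_n / n$ when $j$ is odd and $=1$ when $j$ is even, so expanding $(S_n+X_{n+1})^{2m-1}$ by the binomial theorem and taking expectations yields a recursion whose right-hand side involves only odd moments:
\begin{align*}
T_{n+1}^{(2m-1)} = \left(1+\frac{(2m-1)\alpha}{n}\right) T_n^{(2m-1)} + R_n^{(m)},
\end{align*}
where $R_n^{(m)}$ is a linear combination of $T_n^{(2j-1)}$ with $1 \leq j \leq m-1$ whose coefficients are $O(1)$, and the dominant contribution comes from $j=m-1$ with leading coefficient $\binom{2m-1}{2}$. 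The base case $m=1$ is already provided by \eqref{ElephantRWSnasymp}--\eqref{eq:Asympan}, giving $T_n^{(1)} \sim \beta n^{\alpha}/\Gamma(1+\alpha)$.

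Introducing the product $a_n^{(2m-1)} := \prod_{j=1}^{n-1}(1+(2m-1)\alpha/j) = \Gamma(n+(2m-1)\alpha)/(\Gamma(n)\Gamma(1+(2m-1)\alpha))$, the linear recursion admits the closed form
\begin{align*}
T_n^{(2m-1)} = a_n^{(2m-1)} \left( T_1^{(2m-1)} + \sum_{k=1}^{n-1} \frac{R_k^{(m)}}{a_{k+1}^{(2m-1)}} \right),
\end{align*}
and I would assume inductively $T_n^{(2j-1)} \sim C_j n^{\alpha+j-1}$ for $j<m$ with $C_j = (2j-1)!!\,\beta/[(1-2\alpha)^{j-1}\Gamma(1+\alpha)]$. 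Substituting the induction hypothesis and using Stirling's formula in the form \eqref{Asymp:RatioOfGammas}, the summand becomes $\sim \binom{2m-1}{2}\, C_{m-1}\, \Gamma(1+(2m-1)\alpha)\, k^{m-2-(2m-2)\alpha}$ (up to lower-order terms from smaller $j$), so for $-1<\alpha<1/2$ the partial sum is equivalent to $n^{m-1-(2m-2)\alpha}/[(m-1)(1-2\alpha)]$. Multiplying by $a_n^{(2m-1)} \sim n^{(2m-1)\alpha}/\Gamma(1+(2m-1)\alpha)$ produces $T_n^{(2m-1)} \sim C_m n^{\alpha+m-1}$ with $C_m = (2m-1)C_{m-1}/(1-2\alpha)$, which telescopes to the claimed $C_m$. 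Rescaling by $(n/(1-2\alpha))^{(2m-1)/2}$ yields \eqref{eq:ERWCLTRate<1/2Odd}.

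For the critical case $\alpha=1/2$, the same recursion applies but the exponent $m-2-(2m-2)\alpha$ collapses to $-1$, so each induction step picks up an additional logarithmic factor. Specifically, I would propose the ansatz $T_n^{(2m-1)} \sim D_m n^{m-1/2} (\log n)^{m-1}$, verify the base $D_1 = 2\beta/\sqrt{\pi}$ from \eqref{ElephantRWSnasymp} with $\Gamma(3/2)=\sqrt{\pi}/2$, and then use $\sum_{k=1}^{n-1} k^{-1}(\log k)^{m-2} \sim (\log n)^{m-1}/(m-1)$ to obtain the recursion $D_m = (2m-1)D_{m-1}$, which gives $D_m = (2m-1)!!\,(2\beta/\sqrt{\pi})$. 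Dividing by $(n\log n)^{(2m-1)/2}$ yields \eqref{eq:ERWCLTRate=1/2Odd}.

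The main technical nuisance I anticipate is tracking the subleading terms in $R_n^{(m)}$ carefully: the contributions from $T_n^{(2j-1)}$ with $j<m-1$ and the $(\alpha/n)\binom{2m-1}{2i+1}$ corrections must be shown to be genuinely lower order, which requires some bookkeeping on powers of $n$ (and logs when $\alpha=1/2$), together with the standard Euler--Maclaurin-type estimate for sums $\sum k^s(\log k)^t$. Once this is verified, the induction goes through cleanly and the identification of the constants is forced by the recursion.
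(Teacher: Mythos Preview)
Your proposal is correct and follows essentially the same route as the paper: the same recursion \eqref{eq:Oshiro-odd}, the same variation-of-constants representation via the product $\prod_j (1+(2m-1)\alpha/j)$, and the same induction on $m$ with the same constants $C_m$ and $D_m$. The only point you glossed over is that your formula $a_n^{(2m-1)}=\Gamma(n+(2m-1)\alpha)/[\Gamma(n)\Gamma(1+(2m-1)\alpha)]$ breaks down when $(2m-1)\alpha$ is a negative integer; the paper handles this by starting the product at $j_0+1$ (Lemma~\ref{lem:product210826}), which is a trivial fix that does not affect the asymptotics.
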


%




Theorem~\ref{thmHOT:OddMoments} says that the asymptotics of the higher odd order moments are just a constant multiple of that of the first moment, which is an immediate consequence of (\ref{eq:Asympan}) and (\ref{ElephantRWSnasymp}).  Unlike this, the next theorem shows that for $\alpha<0$, the exponent in the rate of convergence of the $2m$-th moments with $m \geq 2$ is different from that of the second moment, which follows from \eqref{eq:Asympan}, \eqref{eq:GammaSingular}, and \eqref{eq:ElephantRWSn2Asymp}.

\begin{theorem} \label{thmHOT:EvenMoments} The following three assertions hold true: 

\noindent (i) If $-1<\alpha< 0$ and $\alpha \neq -1/2$, then
\begin{equation} \label{eq:ERWCLTRate=otherEven2}
							E\left[\left(\dfrac{S_n}{\sqrt{n/(1-2\alpha)}}\right)^2\right] - 1 \sim -\dfrac{1}{\Gamma(2\alpha)} \cdot n^{-(1-2\alpha)}
							\qquad(n\to \infty). 
						\end{equation}
The left hand side of \eqref{eq:ERWCLTRate=otherEven2} is identically zero if $\alpha=0$ and $n \geq 1$, or if $\alpha=-1/2$ and $n \geq 2$. On the other hand, for any $-1<\alpha \leq 0$ and $m=2,3,\ldots$, we have that, as $n\to \infty$, 		
				\begin{equation}\begin{split}
 					&\frac{1}{(2m-1)!!} E\left[\left(\dfrac{S_n}{\sqrt{n/(1-2\alpha)}}\right)^{2m}\right] - 1 
					 \sim \frac{m(m-1)}{2} \cdot c(\alpha) \cdot 
						 n^{-1},
 							\label{eq:ERWCLTRate<1/2Even}
				\end{split}\end{equation}
			where
			\begin{equation}
			\label{def_of_c}
					c(\alpha)=-\frac{2(2\alpha^2+1)}{3(1-4\alpha) }.
			\end{equation} 
\noindent (ii) If $0<\alpha<1/2$, then we have that, as $n\to\infty$, 
				\begin{align}
					\frac{1}{(2m-1)!!}  E\left[\left(\dfrac{S_n}{\sqrt{n/(1-2\alpha)}}\right)^{2m}\right] - 1
					 	\sim -\dfrac{m  }{\Gamma(2\alpha)} \cdot n^{-(1-2\alpha)}
 							\label{eq:ERWCLTRate(0,1/2)Even}
				\end{align}
				holds for any $m \in \mathbb{N}$.

\noindent (iii) If $\alpha=1/2$, then we have that, as $n\to \infty$,
				\begin{align}
					\frac{1}{ (2m-1)!! } E\left[\left(\dfrac{S_n}{\sqrt{n \log n}}\right)^{2m} \right]- 1  
						 \sim m \gamma \cdot (\log n)^{-1}
 							\label{eq:ERWCLTRate=1/2Even}
				\end{align} 
			holds for any $m \in \mathbb{N}$. Here $\gamma$ is Euler's constant.
\end{theorem}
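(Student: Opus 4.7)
The plan is to derive an affine recursion for $m_{2m}(n) := E[S_n^{2m}]$ from
\eqref{eq:elephantRWCondDistp} and then induct on $m$. Since
$E[X_{n+1}^{k}\mid\mathcal{F}_n]$ equals $1$ or $\alpha S_n/n$ according as $k$
is even or odd, a binomial expansion of $E[(S_n+X_{n+1})^{2m}\mid\mathcal{F}_n]$ yields
\[
m_{2m}(n+1) = \Bigl(1+\tfrac{2m\alpha}{n}\Bigr) m_{2m}(n) + T_m(n),
\qquad T_m(n) := 1 + \sum_{i=1}^{m-1}\Bigl[\tbinom{2m}{2i}+\tfrac{\alpha}{n}\tbinom{2m}{2i-1}\Bigr] m_{2i}(n),
\]
so $T_m$ only depends on lower even moments. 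The base case $m=1$ is immediate from
\eqref{eq:E[S_n^2]exact}; the degenerate cases $\alpha\in\{0,-1/2\}$ give
$m_2(n)=n/(1-2\alpha)$ (for $n\geq 2$ when $\alpha=-1/2$) via the convention
\eqref{eq:GammaSingular}, which is consistent with \eqref{eq:ERWCLTRate=otherEven2}.

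For the inductive step I would substitute $m_{2m}(n) = (2m-1)!!\,\sigma_n^{2m}+D_m(n)$,
where $\sigma_n^2 = n/(1-2\alpha)$ in parts (i)--(ii) and $\sigma_n^2 = n\log n$ in part (iii).
This produces $D_m(n+1)=(1+2m\alpha/n)D_m(n)+R_m(n)$ with
$R_m(n) := T_m(n) - (2m-1)!!\bigl[\sigma_{n+1}^{2m}-(1+2m\alpha/n)\sigma_n^{2m}\bigr]$; by
construction the $n^{m-1}$ parts cancel (this is the moment CLT at leading order), so $R_m$
is strictly subleading. Setting
$b_n^{(2m)} := \Gamma(n+2m\alpha)/[\Gamma(n)\Gamma(1+2m\alpha)]\sim n^{2m\alpha}/\Gamma(1+2m\alpha)$,
variation of parameters gives
$D_m(n) = b_n^{(2m)}\bigl[D_m(1)+\sum_{k=1}^{n-1} R_m(k)/b_{k+1}^{(2m)}\bigr]$, reducing the
asymptotics to a power-sum estimate $\sum_{k\leq n} k^s\sim n^{s+1}/(s+1)$.

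The dichotomy between (i) and (ii) reflects which subleading term in $R_m$ dominates. In case
(i), the induction gives $D_{m-1}(k)=O(k^{m-2})$, of the same order as the algebraic residue
obtained by expanding the CLT ansatz and the $i=m-1,\,m-2$ pieces of $T_m$ to second
subleading order. Collecting these contributions and carrying out the power-sum calculation
turns the induction step into the $m$-independent identity
\[
-\tfrac{1+2\alpha^2}{3} \;=\; \tfrac{c(\alpha)}{2}\bigl[(m-1-2m\alpha)-(m-2)(1-2\alpha)\bigr]
\;=\; \tfrac{(1-4\alpha)\,c(\alpha)}{2},
\]
which forces $c(\alpha)$ to be given by \eqref{def_of_c}. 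In case (ii),
$D_{m-1}(k)\sim\text{const}\cdot k^{m-2+2\alpha}$ strictly dominates the $n^{m-2}$ residue,
so only $\tbinom{2m}{2m-2}D_{m-1}(n)$ contributes at leading order; the exponent
$2m\alpha+(m-1)(1-2\alpha)=m-1+2\alpha$ and the coefficient $-m/\Gamma(2\alpha)$ then drop
out of a direct computation.

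Part (iii), with $\alpha=1/2$, runs along the same lines using
$\sigma_{n+1}^{2m}/\sigma_n^{2m}=1+m/n+m/(n\log n)+O(1/n^2)$, which gives
$\sigma_{n+1}^{2m}-(1+m/n)\sigma_n^{2m}=m(n\log n)^{m-1}+\text{lower order}$. The dominant
subleading term in $R_m$ is then $\tbinom{2m}{2m-2}D_{m-1}(n)$, of order
$n^{m-1}(\log n)^{m-2}$ by the inductive hypothesis; together with $b_n^{(2m)}\sim n^m/m!$ and
$\sum_{k=2}^{n-1}(\log k)^{m-2}/k\sim(\log n)^{m-1}/(m-1)$, this yields the asserted constant
$m\gamma$. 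The main obstacle is case (i): two separate subleading sources (the cancellation
residue and the inductive image of $D_{m-1}$) must combine so that a single $m$-independent
$c(\alpha)$ solves the consistency relation. Once this identity has been isolated, the
rest of the argument is routine asymptotic bookkeeping.
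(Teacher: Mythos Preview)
Your proposal is correct and follows essentially the same route as the paper: the recursion for $m_{2m}(n)$, the substitution of the CLT ansatz, variation of parameters, and induction on $m$ are exactly what the paper does (its $M_n^{(2m)}$, $f_n^{(2m)}$, $h_n^{(2m)}$, $g_n^{(2m)}$ are your $D_m$, inductive piece, algebraic residue, and $1+2m\alpha/n$ after dividing through by $s_n^{(2m)}$, and for $\alpha=1/2$ the paper's auxiliary $L_n^{(2m)}=m_{2m}(n)/\binom{n+m-1}{m}$ is precisely $m_{2m}(n)/b_n^{(2m)}$). The only point you should make explicit is the singularity of $b_n^{(2m)}$ when $2m\alpha\in -\mathbb{N}$, which the paper handles by shifting the starting index (its $j_0(2m\alpha)$); otherwise your consistency identity $-\tfrac{1+2\alpha^2}{3}=\tfrac{(1-4\alpha)c(\alpha)}{2}$ is exactly the paper's combination $H_n^{(2m)}+F_n^{(2m)}\sim c_{2m}n^{-1}$.
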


\begin{remark} \label{rem:PosRecrrence}
Recently, a new critical phenomena about the recurrence property of the elephant random walk has been found inside the diffusive regime. 
Define the first return time
\[ R:=\inf \{ n> 0 : S_n=0 \}. \]
As a consequence of the law of the iterated logarithm (see \cite{Collettietal17b} and \cite{Bercu18}) we can see that $P(R<+\infty)=1$ if $-1<\alpha \leq 1/2$. 
On the other hand, it follows from \eqref{conv:M_nL2} that $P(R=+\infty)>0$ if $\alpha>1/2$. 
 Bertoin \cite{Bertoin22Counting} shows that there is a transition at $\alpha=-1/2$ about the expected first return time: 
\begin{align*}
E[R]\begin{cases}
<+\infty &(-1<\alpha < -1/2), \\
=+\infty &(-1/2\leq \alpha \leq 1/2). 
\end{cases}
\end{align*}
We note that $c(\alpha)$ defined by \eqref{def_of_c} attains its maximum at this critical point $\alpha=-1/2$.
\end{remark}

Our results are summarized in Figure \ref{graph}.

\begin{figure}[hbtp]
 \centering
 \begin{tikzpicture}[scale=0.64]

		%
		%

			\draw[->,>=stealth,semithick] (-10,0)--(-4,0)node[below]{\textrm{\footnotesize $\alpha $}}; 		
			\draw[->,>=stealth,semithick] (-7,-1.0)--(-7,6.5) node[left]{$\gamma$}; 	
			\draw (-6.9,0) node[below left] {\textrm{\scriptsize$O$}};
			\draw (-5,0) node[below]{ \textrm{\footnotesize$1$}};
			\draw[semithick] (-9,3)--(-6,0) node[below]{ \textrm{\footnotesize${\frac{1}{2}}$}}; 
			\draw[thin,dashed] (-8,2)--(-8,0)node[below]{\textrm{\footnotesize$-\frac{1}{2}\,\,$}};
			\draw (2,0)node[below]{\textrm{\footnotesize $1$}};
			\draw[thin,dashed] (-9,3)--(-7,3)node[right]{\textrm{\footnotesize${\frac{3}{2} }$}};
			\draw[thin,dashed] (-9,3)--(-9,0)node[below]{\textrm{\footnotesize${-1}\,\,$}};
			\draw[thin,dashed] (-8,2)--(-7,2)node[right]{\textrm{\footnotesize${ 1}$}};			
			\draw (-7,-1) node[below]{$\mbox{\footnotesize odd order moments}$};

		%
		%

			\draw[->,>=stealth,semithick] (-3,0)--(3,0)node[below]{\textrm{\footnotesize $\alpha $}}; 		
			\draw[->,>=stealth,semithick] (0,-1.0)--(0,6.5) node[left]{$\gamma$}; 
			\draw (0.1,0) node[below left]{\textrm{\scriptsize$O$}};
			\draw (0,2.3) node[right]{\footnotesize$1$};
			\draw[semithick] (-2,2)--(0,2); 
			\draw[semithick] (0,2)--(1,0) node[below]{ \textrm{\footnotesize${\frac{1}{2}}$ }}; 
			\draw (2,0)node[below]{\textrm{\footnotesize${1}$}};
			\draw[thin,dashed] (-2,2)--(-2,0)node[below]{\textrm{\footnotesize${ -1}$}};
			\draw (0,-1) node[below]{$\mbox{\footnotesize$2m$-th order moments ($m\geq2$)}$};

		%
		%

			\draw[->,>=stealth,semithick] (4,0)--(10,0)node[below]{\textrm{\footnotesize$\alpha $}}; 
			\draw[->,>=stealth,semithick] (7,-1)--(7,6.5) node[left]{$\gamma$}; 
			\draw (7.1,0) node[below left] {\textrm{\scriptsize$O$}};
			\draw (7,2.3) node[right]{\footnotesize$1$};
			\draw[semithick] (5,6)--(8,0) node[below]{ \textrm{\footnotesize${\frac{1}{2}}$ }}; 
			\draw (9,0) node[below]{\textrm{\footnotesize $1$}}; 
			\draw[thin,dashed] (5,6)--(5,0)node[below]{\textrm{\footnotesize$-1$}};
			\draw[thin,dashed] (5,6)--(7,6)node[right]{\textrm{\footnotesize${3}$}};
			\draw[thin,dashed] (6,4)--(6,0)node[below]{\textrm{\footnotesize ${- \frac{1}{2} }$}};
			\draw[thin,dashed] (6,4)--(7,4)node[right]{\textrm{\footnotesize${2}$}};
			
			\filldraw[fill=white] (7,2) circle[radius=0.1];
			\filldraw[fill=white] (6,4) circle[radius=0.1];
			\draw (7,-1) node[below]{$\mbox{\footnotesize the second order moment}$};

		\end{tikzpicture}

 \caption{$\alpha$-dependence of the convergence rate. 
%
When $-1<\alpha <1/2$, 
the difference $E[\{S_n/\sqrt{n/(1-2\alpha)}\}^k]-\mu_k$ vanishes like $n^{-\gamma}$. On the other hand, when $\alpha=1/2$, the decay of the difference $E[\{S_n/\sqrt{n\log n}\}^k] -\mu_k$ is much slower than polynomial.
%
We find 
a crossover phenomenon at $\alpha=0$ in the convergence rate of the $2m$-th moment with $m=2,3,\ldots$. For the second moment, there are 
two exceptional points which arise from the singularity of the gamma function.} 
\label{graph}
\end{figure}
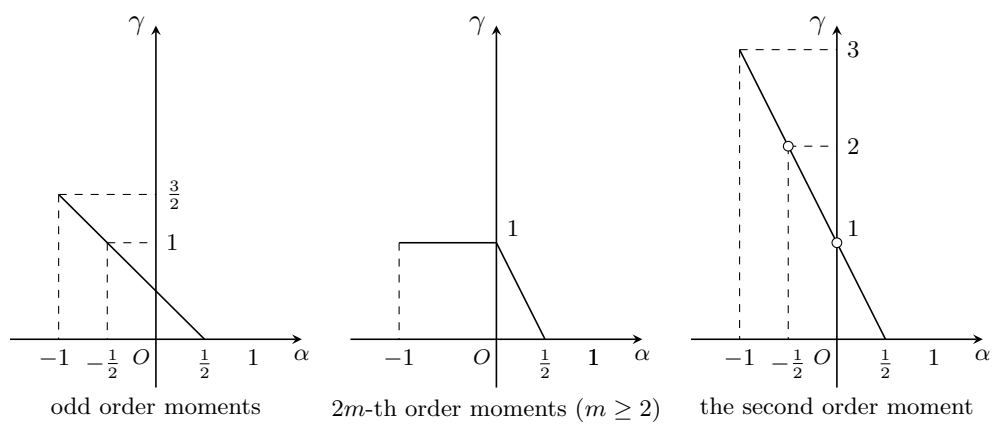

Bercu, Chabanol, and Ruch \cite{BercuChabanolRuch19} derives the asymptotic behavior of the $k$-th moment $E[(S_n)^k]$ of $S_n$ when $\alpha>1/2$. In \cite{BercuChabanolRuch19},
it is shown that $\{M_n\}$ is an $L^k$-bounded martingale for any $k \in \mathbb{N}$, which implies that  
\[ E[(M_n)^k] \to E[(M_{\infty})^k]=:C_k \qquad (n \to \infty). \]
If $k$ is an odd integer and $\beta=0$, then  $C_k=0$.
Except this case, the constant $C_k$ is positive and we have
\[ E[(S_n)^k] \sim C_k (a_n)^k \qquad (n \to \infty). \] 
The main purpose in \cite{BercuChabanolRuch19} is to establish new hypergeometric identities by calculating $E[(M_{\infty})^k]$ in two different ways. One of interesting future problems is to investigate the rate of moment convergence corresponding to \eqref{eq:ERWCLT>1/2}.

\section{Proof of Theorem~\ref{thmHOT:OddMoments}}\label{sec:thmHOT:OddMoments}

%
%
%
%
%
%
%
%

Let $m,n \in \mathbb{N}$. 
We start with the following recursion, which is a consequence of (4.3) in \cite{BercuChabanolRuch19}: 
	\begin{align}
			E[(S_{n+1})^{2m-1}]
				&=\sum_{\ell =1}^m\left\{\binom{2m -1}{2\ell-1}+\frac{\alpha}{n}\binom{2 m-1}{2 \ell -2}\right\} E[(S_{n})^{2\ell -1}].
		\label{eq:Oshiro-odd}
	\end{align}
We set 
\begin{equation}\label{def_of_f_odd}
	\begin{split}
	M^{(2m-1)}_n &:= E[(S_n)^{2m-1}],\\
	f^{(2m-1)}_n 
		&:={\displaystyle \sum_{\ell =1}^{m-1}\left\{\binom{2m -1}{2\ell-1}
		+\frac{\alpha}{n}\binom{2 m-1}{2 \ell -2}\right\} M^{(2\ell-1)}_n}, \\ 
	g^{(2m-1)}_n&:= 1+\frac{(2m-1)\alpha}{n}. 
	\end{split}
\end{equation}
Then the recursion \eqref{eq:Oshiro-odd} can be put into the 
following form: For $n \in \mathbb{N}$,
\begin{equation}\label{eq:Oshiro}
	M_{n+1}^{(2m-1)} =f_n^{(2m-1)}+ g^{(2m-1)}_n	M_n^{(2m-1)}.
\end{equation}
It is convenient to define
\[ f_0^{(2m-1)}:=M_1^{(2m-1)}=E[(X_1)^{2m-1}]=\beta\quad \mbox{and}\quad g_0^{(2m-1)} := 0. \]
To solve \eqref{eq:Oshiro}, we use the following two lemmas. ($j_0$ is introduced in order to avoid issues associated with singularities \eqref{eq:GammaSingular} of the gamma function.)

\begin{lemma}[cf. Sch\"{u}tz and Trimper \cite{SchutzTrimper04}, (12) and (13)] \label{SchutzTrimper04Recursion} Let $j_0 \in \mathbb{N}$. 
The general solution $\{x_n\}_{n={ j_0}+1,{\ j_0}+2,\ldots}$ to the recursion 
\[ \begin{cases} x_{ j_0+1}=f_{ j_0}, \\
x_{n+1} = f_n+g_n \cdot x_n &(n={ j_0}+1,{ j_0}+2,\ldots) \\
\end{cases} \]
is given by
\[ x_n = { x_{j_0+1} \prod_{k=j_0+1}^{n-1} g_k +} \sum_{j=j_0 +1}^{n-1} f_j \cdot \prod_{k=j+1}^{n-1} g_k\qquad (n= j_0+1, j_0+2,\ldots). \]
\end{lemma}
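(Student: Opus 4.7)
The statement is a completely standard closed-form solution for a linear first-order recurrence with variable coefficients, so the plan is simply induction on $n$ with careful handling of the empty-product and empty-sum conventions.

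For the base case $n = j_0+1$, I would fix the conventions $\prod_{k=j_0+1}^{j_0} g_k := 1$ and $\sum_{j=j_0+1}^{j_0} (\cdots) := 0$. Under these conventions the right-hand side of the claimed formula reduces to $x_{j_0+1} \cdot 1 + 0 = x_{j_0+1}$, matching the prescribed initial condition $x_{j_0+1}=f_{j_0}$.

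For the inductive step, assume the formula holds at level $n$, and compute
\begin{align*}
x_{n+1} &= f_n + g_n \cdot x_n \\
&= f_n + g_n \left( x_{j_0+1} \prod_{k=j_0+1}^{n-1} g_k + \sum_{j=j_0+1}^{n-1} f_j \prod_{k=j+1}^{n-1} g_k \right) \\
&= f_n + x_{j_0+1} \prod_{k=j_0+1}^{n} g_k + \sum_{j=j_0+1}^{n-1} f_j \prod_{k=j+1}^{n} g_k.
\end{align*}
Absorbing the term $f_n$ into the sum as the $j=n$ contribution (whose associated product $\prod_{k=n+1}^{n} g_k$ is empty, hence equal to $1$), this becomes
\[ x_{n+1} = x_{j_0+1} \prod_{k=j_0+1}^{n} g_k + \sum_{j=j_0+1}^{n} f_j \prod_{k=j+1}^{n} g_k, \]
which is exactly the claimed formula at level $n+1$. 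This closes the induction.

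There is essentially no obstacle here; the only thing that could go slightly wrong is misalignment of the product and summation indices. I would therefore take some care to state explicitly the empty-product/empty-sum conventions at the outset, and to verify the telescoping of the index $j+1$ to $n$ in the inductive step, so that the $j=n$ term integrates cleanly with the original sum. Once these bookkeeping points are in place, the proof is a one-line induction.
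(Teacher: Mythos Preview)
Your induction argument is correct and complete. The paper itself does not supply a proof of this lemma; it merely states the result with a reference to Sch\"{u}tz and Trimper, so your proposal in fact provides more detail than the original.
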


\begin{lemma} \label{lem:product210826} For $\delta \in \mathbb{R}$, we define %
\begin{align}\label{def_of_j}
			 j_0(\delta) 
			&:= \begin{cases}
				k  &\mbox{if $\delta =-k$ for some $k\in \mathbb{N}$,}  \\
 				0 &\mbox{otherwise},\\
			\end{cases}
\end{align}
and
	\begin{align*}
			A_{\delta} &:= 
			\begin{cases}
				k! &\mbox{if $\delta =-k$ for some $k\in \mathbb{N}$,} \\[2mm]
				\dfrac{1}{\Gamma(1+\delta)} & \mbox{otherwise.} \\
			\end{cases}
	\end{align*}
	Then we have
	\begin{align*}
		\prod_{j=j_0(\delta)+1}^{n-1} \left(1+\dfrac{\delta}{j}\right) \sim A_{\delta} \cdot n^{\delta} \qquad (n \to \infty).
	\end{align*}
\end{lemma}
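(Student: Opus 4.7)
The plan is to handle the two cases in the definition of $j_0(\delta)$ and $A_\delta$ separately, reducing each product to a ratio of gamma values and then invoking the Stirling-type asymptotic \eqref{Asymp:RatioOfGammas}.

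First I would treat the generic case $\delta \notin \{-1,-2,\ldots\}$, for which $j_0(\delta)=0$ and $A_\delta = 1/\Gamma(1+\delta)$. Writing $1+\delta/j = (j+\delta)/j$ and telescoping, exactly as in the computation of $a_n$ in \eqref{eq:Defan}, gives
\begin{equation*}
\prod_{j=1}^{n-1}\left(1+\frac{\delta}{j}\right) = \frac{\Gamma(n+\delta)}{\Gamma(n)\,\Gamma(1+\delta)}.
\end{equation*}
Applying \eqref{Asymp:RatioOfGammas} with $a=\delta$ and $b=0$ then yields the claimed asymptotic $n^{\delta}/\Gamma(1+\delta) = A_\delta \cdot n^{\delta}$.

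Next I would treat the singular case $\delta=-k$ with $k\in\mathbb{N}$. The factor $1+\delta/j$ vanishes at $j=k$, and the shifted starting index $j_0(\delta)+1=k+1$ is chosen precisely to exclude this zero. For $j\geq k+1$, another telescoping computation gives
\begin{equation*}
\prod_{j=k+1}^{n-1}\frac{j-k}{j} = \frac{(n-1-k)!\cdot k!}{(n-1)!} = k!\cdot\frac{\Gamma(n-k)}{\Gamma(n)},
\end{equation*}
and \eqref{Asymp:RatioOfGammas} with $a=-k$, $b=0$ delivers $k!\cdot n^{-k} = A_{-k}\cdot n^{-k}$, which completes the proof.

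The only subtlety is the bookkeeping in the singular case needed to ensure that the zero factor is excluded; once the product is rewritten as a ratio of gamma values, both cases collapse to a single application of \eqref{Asymp:RatioOfGammas}, so no substantive obstacle is anticipated.
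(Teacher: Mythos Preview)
Your proposal is correct and follows essentially the same route as the paper: split into the two cases, rewrite the product as a ratio of gamma values (exactly equations \eqref{eq:product210826_A} and \eqref{eq:product210826_B} in the paper), and then apply \eqref{Asymp:RatioOfGammas}. There is no substantive difference.
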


\begin{proof}
	If $\delta$ is not a negative integer, then for any $n>1$,
		\begin{align}
			\prod_{j=  j_0 (\delta)+1}^{n-1} \left(1+\dfrac{\delta}{j}\right) = \prod_{j=1}^{n-1} \dfrac{j+\delta}{j} = \dfrac{\Gamma(n+\delta)}{\Gamma(n)\Gamma(1+\delta)}.
			 \label{eq:product210826_A} 
		\end{align}
If $\delta=-k$ for some $k \in \mathbb{N}$, then for any $n>k+1$,
		\begin{align}
			\prod_{j= j_0(\delta)+1}^{n-1} \left(1+\dfrac{\delta}{j}\right) 
				= \prod_{j=k+1}^{n-1} \dfrac{j-k}{j} 
				 =\dfrac{\Gamma(n-k) \times k!}{\Gamma(n)}.
				\label{eq:product210826_B} 
		\end{align}
We obtain the desired result by \eqref{Asymp:RatioOfGammas}. 
\end{proof}

We set $j_0=j_0((2m-1)\alpha)$. Note that
\[ g_{ j_0}^{(2m-1)} =0, \quad \mbox{and} \quad 
 g_n^{(2m-1)} \neq 0 \quad \mbox{ for all $n>j_0$.}  \]
By Lemma \ref{SchutzTrimper04Recursion}, $\{ M_n^{(2m-1)} \}$ satisfies 
	\begin{equation}\label{solution}\begin{split}
			M^{(2m-1)}_n 
			&= M^{(2m-1)}_{ j_0+1}  %
						{\bar g}^{(2m-1)}_n
				+	\bar{g}_n^{(2m-1)}	\sum_{j= j_0+1}^{n-1} \frac{ f_j^{(2m-1)}}{ \bar{g}_{j+1}^{(2m-1)}},		
	\end{split}\end{equation}
where
	\begin{align}\label{def_of_a}
		\bar{g}_n^{(2m-1)} :=\prod_{k= j_0+1}^{n-1} g^{(2m-1)}_k\qquad (n= j_0+1, j_0+2,\ldots).
	\end{align}
From  Lemma \ref{lem:product210826}, we have
\begin{align}
		\bar{g}_n^{(2m-1)} \sim A_{(2m-1)\alpha} n^{(2m-1)\alpha}\qquad (n \to \infty).
		\label{Asymp:gBarn2m-1}
\end{align}
We put
\begin{align}
\label{def:DefF2m-1nOdd}
F^{(2m-1)}_n := \bar{g}_n^{(2m-1)}	\sum_{j= j_0+1}^{n-1} \frac{ f_j^{(2m-1)}}{ \bar{g}_{j+1}^{(2m-1)}}.
\end{align}

\subsection{Proof of \eqref{eq:ERWCLTRate<1/2Odd} in Theorem~\ref{thmHOT:OddMoments}} Assume that $-1<\alpha<1/2$ and $\beta \neq 0$. We define
\[
		C_{\ell} =C_{\ell}(\alpha, \beta) := \dfrac{(2\ell-1)!!}{(1-2\alpha)^{\ell-1}} \cdot { \dfrac{\beta}{\Gamma(1+\alpha) }} \qquad (\ell \in \mathbb{N}). 
\]
It suffices to show that
\begin{align}
		 M_n^{(2\ell-1)} &\sim  C_{\ell} n^{\ell-1+\alpha} \qquad (n\to \infty)
			\label{Eeq:ERWCLTRate<1/2Odd'}
\end{align}
for $\ell \in \mathbb{N}$, by induction.  
%
%
We note that \eqref{Eeq:ERWCLTRate<1/2Odd'} with $\ell=1$ follows from \eqref{eq:Asympan} and \eqref{ElephantRWSnasymp}. Assume that $m>1$ and \eqref{Eeq:ERWCLTRate<1/2Odd'} holds true for $\ell=1,\ldots,m-1$.
Then we have that, as $n\to \infty$,
	\begin{equation*}
		f_n^{(2m-1)}\sim \binom{2m-1}{2m-3} M_n^{(2m-3)}\sim (2m-1)(m-1)C_{m-1} n^{m-2+\alpha}.
	\end{equation*}
	%
By \eqref{Asymp:gBarn2m-1} and Lemma~\ref{lem:StolzCesaro} in Appendix \ref{app:calculus}, we have
	\begin{align} 
		F^{(2m-1)}_n
		& \sim  A_{(2m-1)\alpha}n^{(2m-1)\alpha} \sum_{j=j_0+1}^{n-1} \dfrac{(2m-1)(m-1) C_{m-1}j^{m-2+\alpha}}{A_{(2m-1)\alpha}(j+1)^{(2m-1)\alpha}} \notag\\
		&\sim (2m-1)(m-1) C_{m-1}    \cdot n^{(2m-1)\alpha} \sum_{j=j_0+1}^{n-1} j^{m-2+\alpha-(2m-1)\alpha} \notag \\
		&\sim  \frac{ (2m-1)(m-1)}{m-1+\alpha-(2m-1)\alpha} \cdot C_{m-1} n^{m-1+\alpha} \notag \\
		&= \frac{2m-1}{1-2\alpha} \cdot C_{m-1}n^{m-1+\alpha} =C_mn^{m-1+\alpha},\label{asymp:oddSubcriticalFn}
	\end{align}
as $n \to \infty$. Since $m>1$ and $-1<\alpha<1/2$, we can see that $m-1+\alpha >(2m-1)\alpha$. 
From \eqref{solution}, \eqref{Asymp:gBarn2m-1} and \eqref{asymp:oddSubcriticalFn},
	\[
		M_n^{(2m-1)} \sim F^{(2m-1)}_n \sim
		C_mn^{m-1+\alpha}
		\qquad (n \to \infty).
	\]
This completes the proof. \qed

%
%
%
%

\subsection{Proof of \eqref{eq:ERWCLTRate=1/2Odd} in Theorem~\ref{thmHOT:OddMoments}}
Here we assume that $\alpha=1/2$ and $\beta \neq 0$. 
It suffices to show that for $\ell \in \mathbb{N}$,
\begin{align}
		M^{(2\ell-1)}_n& \sim C'_\ell \cdot n^{\ell-1/2} (\log n)^{\ell-1} \qquad (n \to \infty)
	\label{eq:ERWCLTRate=1/2Odd'}
\end{align}
holds true. Here constants $C'_\ell$ are given by 
\[ 
	C'_\ell = C'_\ell(\beta):= \dfrac{2\beta}{\sqrt{\pi}} \cdot (2\ell-1)!!. 
\]
%
We  can see that \eqref{eq:Asympan} and \eqref{ElephantRWSnasymp} imply \eqref{eq:ERWCLTRate=1/2Odd'} with $\ell=1$. Assume that $m>1$ and \eqref{eq:ERWCLTRate=1/2Odd'} holds true for  $\ell=1,\ldots,m-1$.
Then we have that, as $n\to \infty$,
\begin{align*}
	f_n^{(2m-1)} &\sim \binom{2m-1}{2m-3} M^{(2m-3)}_n \\
			&\sim (2m-1)(m-1) C'_{m-1}  n^{m-3/2} (\log n)^{m-2}. 
\end{align*}
By \eqref{Asymp:gBarn2m-1} and Lemma~\ref{lem:StolzCesaro}, we have, as $n\to \infty$, 
\begin{align} 
F^{(2m-1)}_n & \sim A_{m-1/2}n^{m-1/2} \sum_{j=1}^{n-1} \dfrac{(2m-1)(m-1) C'_{m-1}j^{m-3/2}(\log j)^{m-2}}{A_{m-1/2}(j+1)^{m-1/2}}  \notag\\
&\sim  (2m-1)(m-1)C'_{m-1} n^{m-1/2} \sum_{j=1}^{n-1} \dfrac{(\log j)^{m-2}}{j} \notag\\
&\sim (2m-1) C'_{m-1} n^{m-1/2} (\log n)^{m-1} \notag \\
&=C'_mn^{m-1/2} (\log n)^{m-1}. \label{asymp:oddCriticalFn} 
\end{align}
Here we used \eqref{eq:lemma-asymptotic2} in Appendix \ref{app:calculus}.
As $m>1$, it follows from \eqref{solution},  \eqref{Asymp:gBarn2m-1} and \eqref{asymp:oddCriticalFn} that
\[
	M_n^{(2m-1)}  \sim F^{(2m-1)}_n \sim C'_m n^{m-1/2} (\log n)^{m-1}\qquad (n\to \infty).
\]
This completes the proof.\qed

\section{Proof of Theorem~\ref{thmHOT:EvenMoments} {\rm (i)} and {\rm (ii)}}\label{sec:thmHOT:EvenMoments}

As we explained in the previous section, the odd order moments of $S_n$ satisfy the non-autonomous system of first order linear difference equations \eqref{eq:Oshiro-odd}. The even order moments of $S_n$ also satisfy the following recursion, 
which can be derived from (4.2) in ~\cite{BercuChabanolRuch19}: For $m,n \in \mathbb{N}$.
\begin{align}
				E[(S_{n+1})^{2m}]
				&=1+\sum_{\ell =1}^m \left\{\binom{2 m}{2 \ell }+\frac{\alpha}{n}\binom{2 m}{2 \ell -1}\right\} E[(S_{n})^{2\ell }].
		\label{eq:Oshiro-even}
	\end{align}
In this section we assume that $-1<\alpha<1/2$, and put
\[ 
	M_n^{(2m)} :=\frac{1}{(2m-1)!!} \,
							E\left[\left(\dfrac{S_n}{\sqrt{n/(1-2\alpha)}}\right)^{2m}\right]   
						  - 1 \qquad (n, m\in \mathbb{N}).
\]
(To treat the case $\alpha=1/2$ we need to another special normalization of the even moments, and the proof for that case is postponed to the next section.) As we shall see below, $M_n^{(2m)}$ can be considered as the solution to a non-autonomous and {\it non-homogeneous} system of first order linear difference equations. In particular, the crossover phenomenon described in Theorem~\ref{thmHOT:EvenMoments} (i) and (ii) arises out of this non-homogeneity.

The asymptotic behavior of $M_n^{(2)}$ can be obtained from  \eqref{Asymp:RatioOfGammas} and \eqref{eq:E[S_n^2]exact}. For the sake of completeness, we give a short proof for $-1<\alpha<1/2$ ($\alpha=1/2$ will be treated in Section~\ref{sec:ThmEven(iii)} below). 
Using \eqref{eq:Oshiro-even}, one can show that  $\{M^{(2)}_n\}$ satisfies the following recursion:
\[
	M^{(2)}_1=-2\alpha,\qquad M^{(2)}_{n+1} =\frac{n}{n+1} \left(1+\frac{2\alpha}{n}\right) M_n^{(2)}\quad (n \in \mathbb{N}). 
\]
If $-1<\alpha<1/2$ and $\alpha \neq 0,-1/2$, then by \eqref{Asymp:RatioOfGammas},
\begin{align}
	M^{(2)}_n =\frac{M^{(2)}_1}{n}\prod^{n-1}_{j=1}\left(1+\frac{2\alpha}{j}\right)=-\frac{\Gamma(n+2\alpha)}{\Gamma(n+1)\Gamma(2\alpha)}
		\sim -\frac{n^{-(1-2\alpha) }}{\Gamma(2\alpha)}, \label{Asymp:Mn2Main}
\end{align}
as $n\to \infty$, while if $\alpha=0$ and $n \geq 1$, or if $\alpha=-1/2$ and $n \geq 2$, then
\begin{align}
M^{(2)}_n=0. 	
\label{Asymp:Mn2Degenerate}
\end{align}

Putting  
\[
	s^{(2m)}_n =\left(\frac{ n}{1-2\alpha}\right)^m (2m-1)!!, 
\]
we have $E[(S_n)^{2m}]=s^{(2m)}_n (M_n^{(2m)}+1) $. 
Hence \eqref{eq:Oshiro-even} can be put into the following form:
\[\begin{split}
	M^{(2m)}_{n+1} 
	&=\frac{1}{s^{(2m)}_{n+1}}
	\left(	1+\sum^m_{\ell=1}
		 \left\{ 	
		 	\binom{2m}{2\ell}	+	\frac{\alpha}{n}  \binom{2m}{2\ell-1}	
		\right\} 	s^{(2\ell)}_n
					M_n^{(2\ell)}\right.\\
		&\qquad +\left. 
		\sum^m_{\ell=1}
		 \left\{ 	
		 	\binom{2m}{2\ell}	+	\frac{\alpha}{n}  \binom{2m}{2\ell-1}	
		\right\}s^{(2\ell)}_n	
		  \right)-1.
\end{split}
\]
Rearranging the right-hand side,
\[
\begin{split}
M^{(2m)}_{n+1} &=\left(\frac{n}{n+1}\right)^m\left( 1+\frac{2m\alpha}{n}\right) M_n^{(2m)}\\
		&\qquad +\frac{1}{s^{(2m)}_{n+1}}\, \sum^{m-1}_{\ell=1}
		 \left\{ 	
		 	\binom{2m}{2\ell}	+	\frac{\alpha}{n}  \binom{2m}{2\ell-1}	
		\right\} 	s^{(2\ell)}_n
					M_n^{(2\ell)}\\
		&\qquad 	
			+\frac{1}{s^{(2m)}_{n+1}} \,\left\{ 1+\sum^m_{\ell=1}
		 \left\{ 	
		 	\binom{2m}{2\ell}	+	\frac{\alpha}{n}  \binom{2m}{2\ell-1}	
		\right\}s^{(2\ell)}_n	\right\}-1.	
\end{split}\]
Hence, by putting
\begin{align}\label{def_of_gfh}\begin{split}	
&f^{(2m)}_n	:= 
			\frac{1}{s^{(2m)}_{n+1} }
			 \sum^{m-1}_{\ell=1}
		 \left\{ 	
		 	\binom{2m}{2\ell}	+	\frac{\alpha}{n}  \binom{2m}{2\ell-1}	
		\right\}
			s^{(2\ell)}_nM^{(2\ell)}_n, \\[2mm]
	&g^{(2m)}_n	:= \left(\frac{n}{n+1}\right)^m \left(1+\frac{2m\alpha}{n}\right),\\[2mm]
	&h^{(2m)}_n 
		:=  \frac{1}{s^{(2m)}_{n+1}} \left\{1+ \,\sum^m_{\ell=1}
		 \left\{ 	
		 	\binom{2m}{2\ell}	+	\frac{\alpha}{n}  \binom{2m}{2\ell-1}	
		\right\}s^{(2\ell)}_n\right\}	-1,
\end{split}\end{align}
we have 
\begin{equation}\label{eq:Even}
	M_{n+1}^{(2m)} = f_n^{(2m)} +h_n^{(2m)} + g_n^{(2m)} M_n^{(2m)}.
\end{equation}
By Lemma \ref{SchutzTrimper04Recursion}, the recursion \eqref{eq:Even} is solved by 
\begin{equation}\label{representation_M} 
	M_n^{(2m)} =  M_{j_0+1}^{(2m)} \,\bar{g}^{(2m)}_n  
			+F^{(2m)}_n+ H^{(2m)}_n.
\end{equation}
Here $j_0=j_0(2m\alpha)$ is defined by \eqref{def_of_j}, and we put
\begin{align}\label{def_of_g_bar}
	\bar{g}_n^{(2m)} &:=\prod_{j=j_0+1}^{n-1} g^{(2m)}_j	
		=\left(\frac{j_0+1}{n}\right)^m \prod^{n-1}_{j=j_0+1} \left(1+\frac{2m\alpha}{j}\right),
\end{align}	
and 
\begin{align}
\begin{split}
	F^{(2m)}_n&:=\bar{g}_n^{(2m)}  \sum_{j=j_0+1}^{n-1} \dfrac{f_j^{(2m)}}{\bar{g}_{j+1}^{(2m)} },\qquad
	H_n^{(2m)} :=\bar{g}_n^{(2m)}  \sum_{j=j_0+1}^{n-1} \dfrac{h_j^{(2m)}}{\bar{g}_{j+1}^{(2m)} }.
\end{split}\end{align}
Applying Lemma~\ref{lem:product210826}, we have 
\begin{align}\label{eq5-4}
		\bar{g}_n^{(2m)} 	
		\sim (j_0+1)^mA_{2m\alpha} n^{-m(1-2\alpha)}
\end{align}
as $n\to \infty$.

Next, let us investigate the asymptotic behavior of $H^{(2m)}_n$, which does not depend on any even order moments of $S_n$.


\begin{lemma}\label{lemma_H} 
 Assume that $m\geq 2$. \\
(i) If $-1<\alpha \leq 0$, then 
\begin{equation}
	H^{(2m)}_n	\sim \frac{(1-4\alpha)c_{2m} }{m(1-2\alpha)-1} \cdot n^{-1}\qquad (n \to \infty).
	\label{asymp:H2m_nOriginal}
\end{equation}
Here $c_{2m}=\dfrac{m(m-1)}{2} c(\alpha)$, and $ c(\alpha)$ is the constant defined by \eqref{def_of_c}. \\
(ii) If $0<\alpha<1/2$, then $H^{(2m)}_n = o (n^{-(1-2\alpha)})$ as $n \to \infty$.

\end{lemma}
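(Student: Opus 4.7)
The plan is to compute the precise $n\to\infty$ asymptotics of $h_n^{(2m)}$ directly from its definition and then to feed the result into the sum defining $H_n^{(2m)}$. Since $h_n^{(2m)}$ involves only the deterministic normalizations $s_n^{(2\ell)}$ and not any moments of $S_n$, this is an entirely explicit calculation. The first observation is that the $\ell=m$ summand in the definition of $h_n^{(2m)}$ reproduces exactly $g_n^{(2m)}$, so
\[
h_n^{(2m)} = (g_n^{(2m)}-1) + \frac{1}{s_{n+1}^{(2m)}} + \sum_{\ell=1}^{m-1}\left\{\binom{2m}{2\ell}+\frac{\alpha}{n}\binom{2m}{2\ell-1}\right\}\frac{s_n^{(2\ell)}}{s_{n+1}^{(2m)}}.
\]
Because $s_n^{(2\ell)}/s_{n+1}^{(2m)}$ is of order $n^{\ell-m}$, indices $\ell\leq m-3$ contribute only $O(n^{-3})$ and may be discarded; at order $n^{-2}$ only $\ell=m-1$ and, when $m\geq 3$, $\ell=m-2$ matter. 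The term $1/s_{n+1}^{(2m)}$ is $O(n^{-m})$ and thus contributes at order $n^{-2}$ only in the exceptional case $m=2$.

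The key step is a careful Taylor expansion. I would expand $g_n^{(2m)}-1 = -m(1-2\alpha)/n + m(m+1-4m\alpha)/(2n^2) + O(n^{-3})$, and the $\ell=m-1$ contribution as $m(1-2\alpha)/n + m(1-2\alpha)[-3m+2\alpha(m-1)]/(3n^2) + O(n^{-3})$. The leading $n^{-1}$ terms cancel identically, and adding the $\ell=m-2$ contribution $m(m-1)(1-2\alpha)^2/(6n^2)$ (which for $m=2$ is instead supplied by $1/s_{n+1}^{(4)}$; the two expressions agree) produces, after collecting in powers of $\alpha$, the remarkable simplification
\[
h_n^{(2m)} \sim -\frac{m(m-1)(2\alpha^2+1)}{3n^2} = \frac{(1-4\alpha)\,c_{2m}}{n^2}.
\]

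To assemble $H_n^{(2m)}$, I would combine this with (\ref{eq5-4}) to get $h_j^{(2m)}/\bar g_{j+1}^{(2m)} \sim \bigl\{(1-4\alpha)c_{2m}/[(j_0+1)^m A_{2m\alpha}]\bigr\}\,j^{m(1-2\alpha)-2}$. For part (i) the hypotheses $-1<\alpha\leq 0$ and $m\geq 2$ give $m(1-2\alpha)\geq 2>1$, so the Stolz--Cesàro lemma in Appendix~\ref{app:calculus} yields $\sum_{j=j_0+1}^{n-1} j^{m(1-2\alpha)-2}\sim n^{m(1-2\alpha)-1}/[m(1-2\alpha)-1]$; multiplying by $\bar g_n^{(2m)}$ and cancelling produces the stated $n^{-1}$ asymptotic with the correct constant. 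For part (ii) I would split into three subcases according to whether $m(1-2\alpha)$ is greater than, equal to, or less than $1$, obtaining $H_n^{(2m)}=O(n^{-1})$, $O(n^{-1}\log n)$, or $O(n^{-m(1-2\alpha)})$ respectively. Since $m\geq 2$ and $\alpha<1/2$ force both $1>1-2\alpha$ and $m(1-2\alpha)>1-2\alpha$, each of these bounds is $o(n^{-(1-2\alpha)})$, proving the claim.

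The main obstacle is the $n^{-1}$ cancellation inside $h_n^{(2m)}$: without it one would only obtain $H_n^{(2m)}=O(n^{2\alpha})$ when $-1<\alpha\leq 0$, which is too weak to feed into the subsequent analysis of $M_n^{(2m)}$. Verifying this cancellation and extracting the exact $n^{-2}$ coefficient requires expanding $g_n^{(2m)}$, the $\ell=m-1$ summand, and the $\ell=m-2$ summand simultaneously to second order in $1/n$ and simplifying a three-term polynomial in $\alpha$ to the single monomial $-m(m-1)(2\alpha^2+1)/3$. Once this asymptotic is in hand, the remainder of the argument is a routine application of Stolz--Cesàro together with the known behavior of $\bar g_n^{(2m)}$.
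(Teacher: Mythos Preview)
Your proposal is correct and follows essentially the same route as the paper: first establish $h_n^{(2m)}\sim -\tfrac{m(m-1)(2\alpha^2+1)}{3}\,n^{-2}$, then feed this into the sum via \eqref{eq5-4} and the Stolz--Ces\`aro lemma, splitting part (ii) into the same three subcases according to the sign of $m(1-2\alpha)-1$. The only tactical difference is in the bookkeeping for $h_n^{(2m)}$: where you Taylor-expand $g_n^{(2m)}-1$ and the $\ell=m-1$ summand separately and then observe the $n^{-1}$ cancellation, the paper instead groups the $\ell=m$ term together with the $\binom{2m}{2m-2}$ piece of $\ell=m-1$ into the exact closed form $I_n=-\bigl[(n+1)^m-n^m-mn^{m-1}\bigr]/(n+1)^m$, which makes the cancellation automatic and sidesteps one expansion; the remaining $O(n^{-2})$ pieces are collected as $J_n$ and the sum $I_n+J_n$ gives the same coefficient you obtained.
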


\begin{proof} 
First we show 
\begin{equation}
	h_n^{(2m)} 
		\sim -\frac{m(m-1) }{3} \cdot (2\alpha^2+1)  \cdot n^{-2}\quad (n \to \infty).
\label{eq:lemma_h}
\end{equation}
Noting that
\begin{align*}
\frac{s^{(2\ell)}_n}{s^{(2m)}_{n+1}} &= \dfrac{(2\ell-1)!!}{(2m-1)!!} \cdot (1-2\alpha)^{m-\ell} \cdot \dfrac{n^{\ell}}{(n+1)^m} \\
&\sim \dfrac{(2\ell-1)!!}{(2m-1)!!} \cdot (1-2\alpha)^{m-\ell} \cdot n^{\ell-m} \quad (n \to \infty),
\end{align*}
the sum of the terms in $h_n^{(2m)}$ which apparently larger than $O(n^{-2})$ is
\[\begin{split}
	I_n &:=\left\{ \binom{2m}{2m} + \dfrac{\alpha}{n} \binom{2m}{2m-1} \right\} \dfrac{s^{(2m)}_n}{s^{(2m)}_{n+1}} 
			+\binom{2m}{2m-2}  \dfrac{s^{(2m-2)}_n}{s^{(2m)}_{n+1}}-1 \\
	  &=\left(1+\dfrac{2m\alpha}{n}\right) \cdot \left( \dfrac{n}{n+1} \right)^m   + m (1-2\alpha) \cdot \dfrac{n^{m-1}}{(n+1)^m} -1\\
		&= - \dfrac{(n+1)^m - n^m-mn^{m-1}}{(n+1)^m} \sim -\frac{m(m-1)}{2} \cdot n^{-2} \quad (n \to \infty).
\end{split}\]
The $O(n^{-2})$ terms remaining in $h_n^{(2m)}$ are
\[\begin{split}
	J_n &:= \dfrac{\alpha}{n}\binom{2m}{2m-3}  \dfrac{s^{(2m-2)}_n}{s^{(2m)}_{n+1}} +\binom{2m}{2m-4} \frac{s_n^{(2m-4)} }{s^{(2m)}_{n+1}} \\
	&\sim \frac{2m(m-1)}{3} \cdot \alpha (1-2\alpha) \cdot n^{-2} + \frac{m(m-1)}{6} \cdot ( 1-2\alpha)^2 \cdot n^{-2} \\
	&= \dfrac{m(m-1)}{6} \cdot (1-4\alpha^2) n^{-2}\quad (n \to \infty).
\end{split}
\]
Since $h_n^{(2m)}-I_n-J_n = o(n^{-2})$ as $n \to \infty$, we obtain \eqref{eq:lemma_h}.

If $-1<\alpha \leq 0$ and $m \geq 2$, then $-2+m(1-2\alpha) \geq 0$.
Using \eqref{eq5-4}, \eqref{eq:lemma_h}, 
and Lemma~\ref{lem:StolzCesaro}, we have that, as $n\to \infty$, 
\begin{equation*}\begin{split}
		H^{(2m)}_n 	&\sim  
			-\frac{ m(m-1) }{3}  \cdot (2\alpha^2+1) \cdot n^{-m(1-2\alpha)  }  
			\sum^{n-1}_{j=1} j^{-2+m(1-2\alpha)}\\
			&
			\sim    -\frac{ m(m-1) }{3} \cdot \frac{2\alpha^2+1}{m(1-2\alpha)-1}\cdot n^{-1}
			 =
			 \frac{ (1-4\alpha)c_{2m} }{m(1-2\alpha)-1} \cdot n^{-1}. 
\end{split}\end{equation*}
This proves (i). If $0<\alpha < 1/2$ and $m \geq 2$, then as $n \to \infty$,
\begin{align*}
H^{(2m)}_n = \begin{cases}
O(n^{-m(1-2\alpha)  }) &\mbox{if $-2+m(1-2\alpha)<-1$}, \\
O(n^{-m(1-2\alpha)  }\log n)  &\mbox{if $-2+m(1-2\alpha)=-1$}, \\
O(n^{-1})  &\mbox{if $-2+m(1-2\alpha)>-1$}, 
\end{cases}
\end{align*}
which implies (ii).
\end{proof}


\subsection{Proof of \eqref{eq:ERWCLTRate(0,1/2)Even} in Theorem~\ref{thmHOT:EvenMoments}}\label{subsec:EvenMoments(0,1/2)}

Here we suppose that $0<\alpha<1/2$. 
By induction with respect to $m$, we will prove \eqref{eq:ERWCLTRate(0,1/2)Even}. 
The case $m=1$ is proved in \eqref{Asymp:Mn2Main}.
Let $m\geq 2$ be an integer. For $\ell=1,\ldots,m-1$, suppose that
\[
	M_n^{(2\ell)} \sim -\frac{\ell}{\Gamma(2\alpha)}   n^{-(1-2\alpha)}	\qquad (n \to \infty)
\]
holds true. 
By the hypothesis of the induction, we have
\[\begin{split}
	f^{(2m)}_n 
		\sim \binom{2m}{2m-2} \frac{s_n^{(2m-2)}}{s_{n+1}^{(2m)}}M_n^{(2m-2)}
		\sim -(1-2\alpha) \frac{m(m-1)}{\Gamma(2\alpha)}  \cdot n^{-2(1-\alpha)}  
\end{split}\]
as $n\to \infty$.
By \eqref{eq5-4} and Lemma~\ref{lem:StolzCesaro}, we have
\begin{equation}
\begin{split}
	F_n^{(2m)}&\sim  -(1-2\alpha) \frac{m(m-1)}{\Gamma(2\alpha)}
	n^{-m(1-2\alpha)} \sum^{n-1}_{j=1} j^{-2(1-\alpha)+m(1-2\alpha)}\\
	&\sim  -\frac{m}{\Gamma(2\alpha)} n^{-(1-2\alpha)}
\end{split}
\label{Asymp:Fn2malpha>0}
\end{equation}
as $n\to \infty$. 
Here we used the fact that
	\[
		-2(1-\alpha)+m(1-2\alpha)\geq -2(1-\alpha)+2(1-2\alpha)=-2\alpha>-1.
	\]	
In view of \eqref{representation_M}, \eqref{eq5-4} and Lemma~\ref{lemma_H} (ii), we have $M^{(2m)}_n \sim F_n^{(2m)}$ as $n\to\infty$.
This completes the proof. \qed


\subsection{Proof of \eqref{eq:ERWCLTRate<1/2Even} in Theorem~\ref{thmHOT:EvenMoments}}
Suppose that $-1<\alpha \leq  0$. 
By induction with respect to $m\geq 2 $, we will prove \eqref{eq:ERWCLTRate<1/2Even}. 
By \eqref{def_of_gfh}, 
\[
f^{(4)}_n	= 
			\frac{s^{(2)}_n}{s^{(4)}_{n+1} }
			\left\{ 	
		 	\binom{4}{2}	+	\frac{\alpha}{n}  \binom{4}{1}	
		\right\}M^{(2)}_n.
\]
By \eqref{Asymp:Mn2Degenerate}, 
\[
f^{(4)}_n	= F^{(4)}_n = 0 \qquad \mbox{for $n \geq 2$, if $\alpha=0$ or $-1/2$.}
\]
If $-1<\alpha<0$ and $\alpha \neq -1/2$, then \eqref{Asymp:Mn2Main} and the same  computation leading \eqref{Asymp:Fn2malpha>0} yield that
\[
F^{(4)}_n \sim -  \dfrac{2}{ \Gamma(2\alpha) }  n^{ -(1-2\alpha)  } \qquad (n \to \infty).
\]
Taking this, \eqref{representation_M}, \eqref{eq5-4} and Lemma~\ref{lemma_H} (i) into consideration, we have that, as $n\to \infty$,
\[
M^{(4)}_n \sim H_n^{(4)} \sim \dfrac{(1-4\alpha)c_4}{2(1-2\alpha)-1} \cdot n^{-1} = c_4 n^{-1}.
\]
Therefore \eqref{eq:ERWCLTRate<1/2Even} with $m=2$ holds true.

Now, let $m\geq 3$ be an integer, and assume that for $\ell=2,\ldots,m-1$,
\[
	M_n^{(2\ell)} \sim c_{2\ell}  \,n^{-1}\qquad (n \to \infty)
\]
holds true. 
%
The hypothesis of the induction yields that, as $n\to\infty$, 
\[\begin{split}
	f^{(2m)}_n &\sim 
				\binom{2m}{2m-2} \frac{s_n^{(2m-2)}}{s_{n+1}^{(2m)}}M_n^{(2m-2)}
				\sim (1-2\alpha) m  \, c_{2m-2} \,n^{-2}.
\end{split}\]
Using \eqref{eq5-4} and Lemma~\ref{lem:StolzCesaro} again, we have that, as $n\to \infty$, 
\begin{equation*}\begin{split}
		F^{(2m)}_n 
		 	&\sim     (1-2\alpha) m   c_{2m-2}  \,n^{-m(1-2\alpha)  } \sum^{n-1}_{j=j_0+1} j^{-2+m(1-2\alpha)}\\
			&\sim   \frac{     (1-2\alpha)  m c_{2m-2}     		}{m(1-2\alpha)-1} \cdot n^{-1}
		=\frac{(1-2\alpha)(m-2)}{m(1-2\alpha)-1} \cdot c_{2m} \, n^{-1}.
\end{split}\end{equation*}
By \eqref{representation_M}, \eqref{eq5-4} and Lemma~\ref{lemma_H} (ii),
we have that, as $n\to \infty$, 
\[\begin{split}
	M^{(2m)}_n 
	&\sim H^{(2m)}_n+F^{(2m)}_n\\
	&\sim \left\{  \frac{1-4\alpha}{m(1-2\alpha)-1}+ \frac{(1-2\alpha)(m-2)}{m(1-2\alpha)-1}    \right\}  c_{2m} \,n^{-1} =c_{2m} \,n^{-1}.
\end{split}\]
Thus we obtain the desired result. 
\qed

%
%
%
%
%
%
%
%

\section{Proof of Theorem~\ref{thmHOT:EvenMoments} {\rm (iii)}}
\label{sec:ThmEven(iii)}

Here we suppose that $\alpha=1/2$. 
Then \eqref{eq:Oshiro-even} can be put into the 
following form:
	\begin{equation}\begin{split}
		E[(S_{n+1})^{2m}] 
		&=1+ \sum^m_{\ell=1} \left\{ \binom{2m}{2\ell} +\frac{1}{2n}\binom{2m}{2\ell-1} \right\}E[(S_n)^{2\ell}].			
	\end{split} \label{eq6-1} \end{equation}
We introduce an auxiliary sequence: Define
	 \[	 
			L^{(2m)}_n:=E\left[ (S_n)^{2m} \right]  \left/ \binom{n+m-1}{n-1}\right. 
			=\frac{(n-1)!\,m!}{(n+m-1)!}  E\left[ (S_n)^{2m} \right].
	\]
As can be seen by \eqref{eq6-1}, $\{L^{(2m)}_n\}_{n=1,2,\ldots}$ satisfies
	\[\begin{split}
		L^{(2m)}_{n+1}-L^{(2m)}_n
			&=    \binom{n+m}{n}^{-1}   \left(1+ \sum^{m-1}_{\ell=1} \left\{ \binom{2m}{2\ell} +\frac{1}{2n}\binom{2m}{2\ell-1} \right\}E[(S_n)^{2\ell}]\right),
	\end{split}\]
and we obtain
	\begin{equation}\begin{split}
		L^{(2m)}_n 
		&=		1+{ \sum^{n-1}_{j=1}  \binom{j+m}{j}^{-1} }\\
		&
\qquad +	\sum^{n-1}_{j=1} \sum^{m-1}_{\ell=1} \left\{ \binom{2m}{2\ell} +\frac{1}{2j}\binom{2m}{2\ell-1} \right\} \binom{j+m}{j}^{-1}  
					E[(S_j)^{2\ell}].
	\end{split}\label{eq7-2}\end{equation}	
Here we have used the fact  that $L_1^{(2m)} = 1$. Now, we put
\[\begin{split}
	M_n^{(2m)} &:= \frac{1}{(2m-1)!!} E\left[\left(\frac{S_n}{\sqrt{n \log n }}\right)^{2m}\right]-1,\\[2mm]
	t_n^{(2m)} &:=	(n\log n)^m	\cdot (2m-1)!!\left/\binom{n+m-1}{n-1}  \right. .
\end{split}\] 
Then we have $L_n^{(2m)}=t_n^{(2m)}(M_n^{(2m)}+1)$. Hence, \eqref{eq7-2} can be put into the following form:
\begin{equation}
	M_n^{(2m)} =I^{(2m)}_n +J^{(2m)}_n+K^{(2m)}_n,
\end{equation}
where
\begin{equation}\label{eq7-3}\begin{split}
	I^{(2m)}_n :=&\,\frac{1}{t_n^{(2m)}}   \left(1+\sum^{n-1}_{j=1} \binom{j+m}{j}^{-1}	\right),  \\[2mm]
	J^{(2m)}_n :=&
		 \frac{ 1 }{t_n^{(2m)}}   
		 		\sum^{n-1}_{j=1} \sum^{m-1}_{\ell=1} \left\{ \binom{2m}{2\ell} +\frac{1}{2j}\binom{2m}{2\ell-1} \right\}  
		d_j^{(2\ell)}   -1,\\[2mm]
	K^{(2m)}_n :=&  \frac{   1 }{t_n^{(2m)}}  \sum^{n-1}_{j=1} \sum^{m-1}_{\ell=1} \left\{ \binom{2m}{2\ell} +\frac{1}{2j}\binom{2m}{2\ell-1} \right\} 
					d_j^{(2\ell)} M_j^{(2\ell)},
\end{split}
\end{equation}
and
\[
d_j^{(2\ell)}:=\binom{j+m}{j}^{-1} \binom{j+\ell-1}{j-1}  t_j^{(2\ell)}.
\]

We first investigate the asymptotic behaviors of $I^{(2m)}_n$ and $J^{(2m)}_n$, which do not depend on any even order moments of $S_n$. As in the case $0<\alpha<1/2$ in Section \ref{subsec:EvenMoments(0,1/2)}, we will see that they are negligible compared with $K^{(2m)}_n$.

\begin{lemma}\label{lem5-2}
	For any $m\geq 2$, we have that, as $n\to \infty$, 
		\begin{align}
			I^{(2m)}_n&=O((\log n)^{-m}) , \label{eq5-5}\\
			J^{(2m)}_n&=O((\log n)^{-m}) . \label{eq5-6} 
		\end{align}
\end{lemma}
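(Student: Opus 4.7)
My plan is to estimate $I_n^{(2m)}$ and $J_n^{(2m)}$ separately, relying on the basic asymptotic
\[
t_n^{(2m)} = m! \cdot (2m-1)!! \cdot (\log n)^m \cdot (1 + O(1/n)),
\]
which follows immediately from the identity $\binom{n+m-1}{n-1} = n(n+1)\cdots(n+m-1)/m! = (n^m/m!)(1+O(1/n))$. In particular, $t_n^{(2m)} \asymp (\log n)^m$, so the bound $O((\log n)^{-m})$ is equivalent to showing that the corresponding numerators are $O(1)$ (or, for $J_n^{(2m)}$, differ from $t_n^{(2m)}$ by $O(1)$).

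For $I_n^{(2m)}$ the argument is short: since $\binom{j+m}{j}^{-1} = m!/\prod_{k=1}^m(j+k) \leq m!/j^m$ and $m \geq 2$, the series $\sum_{j \geq 1}\binom{j+m}{j}^{-1}$ converges, making the bracketed expression in the definition of $I_n^{(2m)}$ bounded in $n$. Dividing by $t_n^{(2m)}$ gives the claimed bound.

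For $J_n^{(2m)}$ I would first simplify
\[
d_j^{(2\ell)} = (2\ell-1)!! \cdot m! \cdot \frac{j^\ell(\log j)^\ell}{\prod_{k=1}^m(j+k)} = O\bigl(j^{\ell-m}(\log j)^\ell\bigr),
\]
and then decompose the double sum in three pieces. The contribution from $\ell \leq m-2$ is $O(j^{-2}(\log j)^{m-2})$ termwise and hence $O(1)$ after summation, whether coming from the $\binom{2m}{2\ell}$ or the $\binom{2m}{2\ell-1}/(2j)$ part. The $\binom{2m}{2\ell-1}/(2j)$ piece with $\ell = m-1$ is $O(j^{-2}(\log j)^{m-1})$ and also sums to $O(1)$. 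The dominant piece is $\binom{2m}{2m-2}d_j^{(2m-2)}$: using the expansion $j^{m-1}/\prod_{k=1}^m(j+k) = 1/j + O(1/j^2)$ and the standard asymptotic $\sum_{j=2}^{n-1}(\log j)^{m-1}/j = (\log n)^m/m + O(1)$, this piece equals
\[
m(2m-1)(2m-3)!! \cdot m! \cdot \frac{(\log n)^m}{m} + O(1) = m! \cdot (2m-1)!! \cdot (\log n)^m + O(1),
\]
where I use the combinatorial identity $(2m-1)(2m-3)!! = (2m-1)!!$. This coefficient coincides exactly with the leading coefficient of $t_n^{(2m)}$, so the entire numerator of $J_n^{(2m)}$ equals $t_n^{(2m)} + O(1)$, and dividing by $t_n^{(2m)} \asymp (\log n)^m$ yields $J_n^{(2m)} = O((\log n)^{-m})$.

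The main obstacle is not the convergence of any individual sum but the exact cancellation at order $(\log n)^m$ between the dominant piece of the numerator and $t_n^{(2m)}$. This forces the $O(1)$ remainders from several sources to be collected together: the lower indices $\ell \leq m-2$, the two half-weight terms, the second-order expansion of $j^{m-1}/\prod(j+k)$, the $O(1)$ correction in the log sum, and the $O((\log n)^m/n) = o(1)$ error inside $t_n^{(2m)}$. Once the combinatorial identity is in hand, the rest is careful bookkeeping.
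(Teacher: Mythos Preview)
Your proposal is correct and follows essentially the same route as the paper's proof. Both arguments bound $I_n^{(2m)}$ by noting that $\sum_{j\ge 1}\binom{j+m}{j}^{-1}$ converges for $m\ge 2$, and both handle $J_n^{(2m)}$ by separating the summable pieces ($\ell\le m-2$ and the $\tfrac{1}{2j}$ terms) from the dominant $\ell=m-1$ piece, then using the expansion $j^{m-1}/\prod_{k=1}^m(j+k)=1/j+O(1/j^2)$ together with $\sum_{j\le n}(\log j)^{m-1}/j=(\log n)^m/m+O(1)$ to produce exactly the leading coefficient $m!\,(2m-1)!!$ of $t_n^{(2m)}$; the only cosmetic difference is that the paper factors out $t_n^{(2m)}$ at the start (introducing the auxiliary factors $c_n,c_j'=1+O(1/j)$) and works with the ratio, whereas you compute the numerator first and subtract $t_n^{(2m)}$ at the end.
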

\begin{proof}
	To prove \eqref{eq5-5}, note that
	\begin{align}
	\binom{j+m}{j} &\sim \dfrac{j^m}{m!} \qquad (j \to \infty),\label{asymp:binomj+mj}\\
		t_n^{(2m)} &\sim m! \cdot (2m-1)!! \cdot  (\log n)^{m} \qquad (n \to \infty). \label{eq5-8-1}
	\end{align}
As $m \geq 2$, we have 
	\begin{align*}
		I_n^{(2m)} \sim \frac{1}{ t^{(2m)}_n}  \left(1+\sum^{\infty}_{j=1} \binom{j+m}{j}^{-1}	\right)=O((\log n)^{-m}).
	\end{align*}
	
	Next, we will show \eqref{eq5-6}. For $1\leq \ell \leq m-1$, we have
	\[
		d_j^{(2\ell)} \sim m! \cdot (2\ell-1)!! \cdot j^{-(m-\ell)}(\log j)^{\ell}\qquad (j \to \infty).
	\]
	This  implies that, for $1\leq \ell \leq m-2$,
	\[
		 \frac{ 1 }{t_n^{(2m)}}   
		 		\sum^{n-1}_{j=1} d_j^{(2\ell)}= O\left(\dfrac{1}{t_n^{(2m)}}\right)=O((\log n)^{-m}),
	\]
	and also, for $1\leq \ell \leq m-1$,
	\[
		 \frac{ 1 }{t_n^{(2m)}}   
		 		\sum^{n-1}_{j=1}  \frac{1}{2j} \cdot  d_j^{(2\ell)}   = O\left(\dfrac{1}{t_n^{(2m)}}\right)=O((\log n)^{-m}).
	\]
	Therefore we obtain
	\begin{equation}
		J^{(2m)}_n= \frac{  1    }{t_n^{(2m)}} \sum^{n-1}_{j=1} \binom{2m}{2m-2}
			d_j^{(2m-2)}-1
		+O((\log n)^{-m}).
	\label{eq5-8-2}\end{equation}
The first two terms in the right hand side of \eqref{eq5-8-2} are
	\begin{equation}\label{eq5-8}
		\begin{split}
				\frac{ (n+m-1)! }{ n^m (n-1)! } \cdot \frac{m}{(\log n)^m} \sum^{n-1}_{j=1} \frac{j^m j!}{(j+m)!} \cdot \frac{(\log j)^{m-1}}{j}-1.	\\
		\end{split}
	\end{equation}
Note that
\begin{equation}\label{asymp:cncn'}
\begin{split}
		c_n&:= \frac{(n+m-1)!}{n^m(n-1)!}=  
		 \prod^{m-1}_{k=1} \left(1+\frac{k}{n}\right)
									=1+O\left(\dfrac{1}{n}\right)\qquad (n \to \infty), \\
		c_n'&:= \frac{n^m n!}{(n+m)!} = \prod^{m}_{k=1} \left(1+\frac{k}{n}\right)^{-1}
			=1+O\left(\dfrac{1}{n}\right)\qquad (n \to \infty).
\end{split} 
\end{equation}	
Hence  \eqref{asymp:cncn'} and \eqref{eq:lemma-asymptotic2} in Appendix \ref{app:calculus} imply that \eqref{eq5-8} is
	\[\begin{split}
		&  c_n \frac{m}{(\log n)^m} \sum^{n-1}_{j=1} c_j' \frac{(\log j)^{m-1}}{j} -1\\
		&= c_n \frac{m}{(\log n)^m} \sum^{n-1}_{j=1} (c_j'-1) \frac{(\log j)^{m-1}}{j} 
			+ (c_n-1)\frac{m}{(\log n)^m} \sum^{n-1}_{j=1} \frac{(\log j)^{m-1}}{j} \\
			&\qquad+	\frac{m}{(\log n)^m} \sum^{n-1}_{j=1} \frac{(\log j)^{m-1}}{j}-1\\
			&=O((\log n)^{-m}),
		\end{split}\]	
	as $n\to \infty$. Here we use the fact that $\displaystyle \sum^{\infty}_{j=1}	(c'_j-1)\frac{(\log j)^{m-1}}{j}$ converges.
Taking (\ref{eq5-8-2}) into consideration, we obtain (\ref{eq5-6}). 
\end{proof}

Now, by induction with respect to $m$, we will prove \eqref{eq:ERWCLTRate=1/2Even} in Theorem~\ref{thmHOT:EvenMoments}. By \eqref{eq7-2}, we have
\begin{align*}
L^{(2)}_n &= 1+\sum_{j=1}^{n-1} \dfrac{1}{j+1} = \sum_{j=1}^n \dfrac{1}{j},
\end{align*}
and
\begin{align*}
M^{(2)}_n &= \dfrac{L^{(2)}_n}{t^{(2)}_n} - 1 = \dfrac{1}{\log n} \left(\sum_{j=1}^n \dfrac{1}{j}-\log n\right) \sim \gamma (\log n)^{-1} \qquad (n\to\infty).
\end{align*}
Let $m\geq 2$ be fixed and suppose that for any $1\leq \ell \leq m-1$
\[
	M^{(2\ell)}_{n}\sim \ell \gamma (\log n)^{-1} \qquad (n\to\infty)
\]
holds true. Then we have that, as $n\to \infty$, 
\[\begin{split}
K^{(2m)}_n&\sim   \frac{   1 }{t_n^{(2m)}}  \sum^{n-1}_{j=1}   \binom{2m}{2m-2}   
					d_j^{(2m-2)}M_j^{(2m-2)}\\
			&\sim	 \frac{ m(m-1)\gamma }{ (\log n)^m} \sum^{n-1}_{j=1} \frac{(\log j)^{m-2}}{j}\\
			&\sim  m\gamma (\log n)^{-1}.
\end{split}\]
Here we used \eqref{eq:lemma-asymptotic2} in Appendix \ref{app:calculus}. In view of Lemma \ref{lem5-2}, $M_n^{(2m)} \sim K_n^{(2m)}$ as $n \to \infty$. This completes the proof of \eqref{eq:ERWCLTRate=1/2Even} in Theorem~\ref{thmHOT:EvenMoments}.
\qed

\appendix

\section{Lemmas from calculus} \label{app:calculus}



\begin{lemma}[Stolz-Ces\`{a}ro: See e.g. Knopp \cite{Knopp56Dover}, p.34] \label{lem:StolzCesaro} If a real sequence $\{a_n\}$ and a positive sequence $\{b_n\}$ satisfy
\[ \lim_{n\to \infty} \dfrac{a_n}{b_n} = L \in \mathbb{R} \cup \{ \pm \infty \},\quad\mbox{and}\quad \sum_{n=1}^{\infty} b_n = +\infty, \]
then
\[ \lim_{n\to \infty} \dfrac{\sum_{k=1}^n a_k}{\sum_{k=1}^n b_k} = L. \]
\end{lemma}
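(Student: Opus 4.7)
The plan is to reduce everything to a bound on $\sum_{k=1}^n(a_k-Lb_k)$ when $L$ is finite, exploiting the two hypotheses in complementary ways: the convergence $a_n/b_n\to L$ controls the tail of the sum from some index $N$ onwards, while the divergence $B_n:=\sum_{k=1}^n b_k\to +\infty$ renders the fixed "head" $\sum_{k<N}$ negligible after dividing by $B_n$. This is the classical head-plus-tail splitting that underlies every Ces\`{a}ro--type averaging theorem.

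For the case $L\in\mathbb{R}$, the strategy is: fix $\varepsilon>0$ and choose $N$ with $|a_k-Lb_k|<\varepsilon b_k$ for every $k\ge N$; this uses $b_k>0$ crucially to rewrite the ratio hypothesis as a bound on differences. Splitting $\sum_{k=1}^n(a_k-Lb_k)$ at $N$ produces a fixed constant $C_N:=\sum_{k=1}^{N-1}(a_k-Lb_k)$ plus a tail bounded in absolute value by $\varepsilon\sum_{k=N}^n b_k\le\varepsilon B_n$. Dividing through by $B_n$ and letting $n\to\infty$ with $\varepsilon$ fixed yields
\[
\limsup_{n\to\infty}\left|\frac{\sum_{k=1}^n a_k}{B_n}-L\right|\le \limsup_{n\to\infty}\frac{|C_N|}{B_n}+\varepsilon=\varepsilon,
\]
and sending $\varepsilon\downarrow 0$ finishes this case.

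For $L=+\infty$, I would rerun exactly the same truncation with a threshold $M$ replacing $\varepsilon$: choose $N$ so that $a_k\ge M b_k$ for every $k\ge N$, and rearrange so that the $M$ survives in the limit. For $n\ge N$ this gives
\[
\frac{\sum_{k=1}^n a_k}{B_n}\ge \frac{\sum_{k=1}^{N-1}a_k+M(B_n-B_{N-1})}{B_n}= M+\frac{\sum_{k=1}^{N-1}(a_k-Mb_k)}{B_n},
\]
and the right-hand fraction tends to $0$ since its numerator is a constant and $B_n\to\infty$. Hence $\liminf_{n\to\infty}\sum_{k=1}^n a_k/B_n\ge M$ for every $M$, forcing the limit to be $+\infty$. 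The case $L=-\infty$ follows by applying this to $\{-a_n\}$.

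I do not anticipate any real obstacle. The only minor point of care is the algebraic rearrangement in the infinite case: one must put the split in the form \emph{constant} over $B_n$ \emph{plus} $M$, rather than trying to bound the whole ratio directly, so that the arbitrarily large threshold $M$ is preserved in the limit while only a bounded head is divided out by $B_n\to\infty$.
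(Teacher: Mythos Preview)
Your argument is correct in all three cases; the head--tail splitting is exactly the standard device, and the algebra in the $L=+\infty$ case is handled cleanly. There is nothing to compare against here: the paper does not supply a proof of this lemma but simply cites it as a classical result from Knopp, so your write-up actually goes beyond what the paper provides.
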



\begin{lemma}[cf.  Knopp \cite{Knopp56Dover}, p.64] \label{Knopp56p64Thm3bis} Let $f(t)$ be a positive function on $[1,\infty)$, which is non-increasing on $[n_0,\infty)$ for some $n_0 \in \mathbb{N}$. For each $n \in \mathbb{N}$, we set
\begin{align*}
S_n:= \sum_{j=1}^n f(j) \quad \mbox{and} \quad I_n:=\int_1^n f(t)\,dt.
\end{align*}
If $I_n \to +\infty$ as $n \to \infty$, then
\begin{align*}
 \dfrac{S_n}{I_n}  = 1+O((I_n)^{-1}) \qquad \mbox{as $n \to \infty$.}
\end{align*}
\end{lemma}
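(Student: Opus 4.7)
The claim is equivalent to $S_n - I_n = O(1)$ as $n \to \infty$, since dividing through by $I_n$ and using the hypothesis $I_n \to +\infty$ then yields $S_n/I_n - 1 = O(I_n^{-1})$. My plan is to establish this additive $O(1)$ bound by the classical integral-test comparison on $[n_0,\infty)$, treating the contribution from the initial segment $[1,n_0]$ separately as a fixed constant.

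The main step is the monotonicity sandwich on the interval where $f$ is non-increasing. For each integer $j \geq n_0$, one has $f(j+1) \leq f(t) \leq f(j)$ for $t \in [j,j+1]$, so
\[
f(j+1) \leq \int_j^{j+1} f(t)\,dt \leq f(j).
\]
Summing from $j = n_0$ to $j = n-1$ and rearranging, the difference $\sum_{j=n_0}^{n} f(j) - \int_{n_0}^{n} f(t)\,dt$ is pinched between $-f(n)$ and $f(n_0)$. Using that $f$ is positive and non-increasing on $[n_0,\infty)$, we have $0 < f(n) \leq f(n_0)$, so this difference is bounded in absolute value by $f(n_0)$, uniformly in $n$.

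To finish, I would split $S_n = \sum_{j=1}^{n_0-1} f(j) + \sum_{j=n_0}^{n} f(j)$ and $I_n = \int_1^{n_0} f(t)\,dt + \int_{n_0}^{n} f(t)\,dt$. The pre-$n_0$ pieces are fixed finite quantities independent of $n$ (a finite sum of positive numbers and a bounded integral of a positive function on a compact interval), hence $O(1)$. Combining with the sandwich bound from the previous step gives $|S_n - I_n| \leq C(n_0,f)$, which is the desired conclusion. The argument is essentially the proof of the integral test and carries no real obstacle; the only mild care needed is absorbing the boundary contribution from $[1,n_0]$, where monotonicity is not assumed, as an additive constant.
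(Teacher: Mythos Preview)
Your proof is correct and follows essentially the same approach as the paper: split $S_n$ and $I_n$ at $n_0$, bound the tail difference $\sum_{j=n_0}^n f(j)-\int_{n_0}^n f(t)\,dt$ by $f(n_0)$ via the integral-test sandwich, and absorb the fixed pre-$n_0$ pieces as constants. (A minor remark: your sandwich actually gives the sharper lower bound $f(n)$ rather than $-f(n)$, so the tail difference is in fact nonnegative, matching the paper's $0\leq S'_n-I'_n\leq f(n_0)$; this does not affect your conclusion.)
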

\begin{proof} For $n \geq n_0$, let
\begin{align*}
S'_n := \sum_{j=n_0}^n f(j)  \quad \mbox{and} \quad I'_n := \int_{n_0}^n f(t)\,dt.
\end{align*}
As in p.64 of \cite{Knopp56Dover}, we can see that $0 \leq S'_n - I'_n \leq f(n_0)$ for any $n \geq n_0$. Since
\[ \dfrac{S_n}{I_n} - 1 = \dfrac{S_{n_0-1} - I_{n_0}}{I_n} + \dfrac{S'_n - I'_n}{I_n} \qquad (n \geq n_0), \]
we obtain the conclusion.
\end{proof}

Let $m \geq 1$. The function
\[ f(t) =\frac{(\log t)^{m-1}}{t} \qquad (t \geq 1) \]
is non-increasing on $[e^{m-1},\infty)$. As
\[ I_n=\int^n_1 f(t)\,dt=\frac{(\log n)^m}{m}, \]
Lemma \ref{Knopp56p64Thm3bis} implies that 
\begin{align}\label{eq:lemma-asymptotic2}
\frac{m}{(\log n)^m} \displaystyle \sum^n_{j=1}\frac{ (\log j)^{m-1}}{j}  &=1+O( (\log n)^{-m}) \qquad (n\to \infty).
\end{align}


\section{A remark on Corollary 1 in \cite{FanHuMa21}} \label{appendix:FanHuMa21Cor1}
For readers' convenience, we briefly explain how to obtain \eqref{eq:FanHuMa21Cor1improved} from the bounds given in Corollary 1 in \cite{FanHuMa21}. Let 
\begin{align*}
s_n^2 &:= \sum_{i=1}^n \dfrac{\Gamma(i)^2}{\Gamma(i+\alpha)^2},
\intertext{and}
\sigma_n^2 &:= 
\begin{cases}
\dfrac{\Gamma(n)^2}{\Gamma(n+\alpha)^2} \cdot \dfrac{n}{1-2\alpha} &(-1<\alpha<1/2), \\[4mm]
\dfrac{\Gamma(n)^2}{\Gamma(n+\alpha)^2}  \cdot n\log n &(\alpha=1/2).
\end{cases}
\end{align*}
It is enough to show that
\begin{align}
\label{ineq:DanHuMaCor1Remainder}
\begin{split}
\left| \dfrac{\sqrt{s_n^2}}{\sqrt{\sigma_n^2}} - 1 \right| \leq
\begin{cases}
\dfrac{C'_{\alpha}}{n} & (-1<\alpha< 0), \\[4mm]
\dfrac{C'_{\alpha}}{n^{1-2\alpha}} & (0< \alpha<1/2), \\[4mm]
\dfrac{C'_{1/2}}{\log n}  & (\alpha=1/2).
\end{cases}
\end{split} 
\end{align}
To this end, we use Wendel's inequality \cite{Wendel48}: 
\[ \left(\dfrac{x}{x+\alpha}\right)^{1-\alpha} \leq \dfrac{\Gamma(x+\alpha)}{x^{\alpha} \Gamma(x)} \leq 1 \quad \mbox{for $\alpha \in (0,1)$ and $x>0$.}\]
For $n=1,2,\cdots$, we have
\[ n^{-2\alpha} \leq \dfrac{\Gamma(n)^2}{\Gamma(n+\alpha)^2} \leq n^{-2\alpha} \left(1+\dfrac{\alpha}{n}\right)^{2(1-\alpha)}=n^{-2\alpha}(1+r_n), \]
where $r_n=O(n^{-1})$ as $n \to \infty$. Using this and Lemmas in Appendix \ref{app:calculus}, one can see that $\{s_n^2-\sigma_n^2\}$ is bounded. Noting that 
\begin{align*}
\left| \dfrac{\sqrt{s_n^2}}{\sqrt{\sigma_n^2}} - 1 \right| = \dfrac{|s_n^2 - \sigma_n^2|}{\sqrt{\sigma_n^2} \cdot (\sqrt{s_n^2}+\sqrt{\sigma_n^2})} \leq \dfrac{|s_n^2 - \sigma_n^2|}{\sigma_n^2},
\end{align*}
we obtain \eqref{ineq:DanHuMaCor1Remainder} for $\alpha \in (0,1/2]$. When $\alpha \in (-1,0)$, we use 
\begin{align*}
\left(\dfrac{x-1}{(x-1)+(\alpha+1)}\right)^{1-(\alpha+1)} \leq \dfrac{\Gamma((x-1)+(\alpha+1))}{(x-1)^{\alpha+1} \Gamma(x-1)} \leq 1, 
\end{align*}
that is,
\begin{align*}
\left(\dfrac{x-1}{x+\alpha}\right)^{-\alpha} \leq \dfrac{\Gamma(x+\alpha)}{(x-1)^{\alpha} \Gamma(x)} \leq 1
\end{align*}
for $x>1$. For $n=2,3,\cdots$, we have
\[ n^{-2\alpha}(1+r'_n) \leq \dfrac{\Gamma(n)^2}{\Gamma(n+\alpha)^2}  \leq n^{-2\alpha}(1+r''_n), \]
where $r'_n,r''_n=O(n^{-1})$ as $n \to \infty$. Using this, one can see that there is a positive constant $M_{\alpha}$ such that $|s_n^2-\sigma_n^2| \leq M_{\alpha}n^{-2\alpha}$ for all $n$. As $\sigma_n^2 = O(n^{1-2\alpha})$ as $n \to \infty$, we obtain \eqref{ineq:DanHuMaCor1Remainder} for $\alpha \in (-1,0)$.



\begin{thebibliography}{99}
%
\bibitem{BaurBertoin16}
Baur, E. and Bertoin, J. (2016). Elephant random walks and their connection to P\'{o}lya-type urns,
{\it Phys. Rev. E}, {\bf 94}, 052134
\bibitem{Bercu18}
Bercu, B. (2018). A martingale approach for the elephant random walk, {\it J. Phys. A: Math. Theor.}, {\bf 51}, 015201
\bibitem{BercuChabanolRuch19}
Bercu, B., Chabanol, M.-L., and Ruch, J.-J. (2019). Hypergeometric identities arising from the elephant random walk, {\it J. Math. Anal. Appl.}, {\bf 480}, 123360
\bibitem{Bertoin20Counterbalancing}
Bertoin, J. (2020). Counterbalancing steps at random in a random walk, arXiv:2011.14069
\bibitem{Bertoin21Universality}
Bertoin, J. (2021). Universality of noise reinforced Brownian motions, In: {\it In and out of equilibrium 3: celebrating Vladas Sidoravicius}, {\it Prog. Probab.}, {\bf 77}, 147--161, Birkh\"{a}user.
\bibitem{Bertoin22Counting}
Bertoin, J. (2022). Counting the zeros of an elephant random walk, {\it   Trans. Amer. Math. Soc.}, {\bf 375}, 5539--5560.
\bibitem{Billingsley12PMAnniversary}
Billingsley, P. (2012). Probability and measure. Anniversary edition, John Wiley \& Sons.
\bibitem{Collettietal17a}
Coletti, C. F., Gava, R. J., and Sch\"{u}tz, G. M. (2017a). Central limit theorem for the elephant random walk, {\it J. Math. Phys.}, {\bf 58}, 053303
\bibitem{Collettietal17b}
Coletti, C. F., Gava, R. J., and Sch\"{u}tz, G. M. (2017b). A strong invariance principle for the elephant random walk, {\it J. Stat. Mech.}, 123207
\bibitem{FanHuMa21}
Fan, X., Hu, H., and Ma, X. (2021). Cram\'{e}r moderate deviations for the elephant random walk, {\it J. Stat. Mech.}, 023402
\bibitem{Knopp56Dover}
Knopp, K. (1956). Infinite sequences and series, Dover Publications.
\bibitem{KubotaTakei19JSP}
Kubota, N. and Takei, M. (2019). Gaussian fluctuation for superdiffusive elephant random walks, {\it J. Statist. Phys.}, {\bf 177}, 1157--1171.
\bibitem{Kursten16}
K\"{u}rsten, R. (2016). Random recursive trees and the elephant random walk, {\it Phys. Rev. E}, {\bf 93}, 032111
\bibitem{MaElMachkouriFan22}
Ma, X., El Machkouri, M., and Fan, X. (2022). On Wasserstein-$1$ distance in the central limit theorem for elephant random walk, {\it J. Math. Phys.}, {\bf 63}, 013301
\bibitem{SchutzTrimper04}
Sch\"{u}tz, G. M. and Trimper, S. (2004). Elephants can always remember: Exact long-range memory effects in a non-Markovian random walk, {\it Phys. Rev. E}, {\bf 70}, 045101
\bibitem{Tao12AMSBook}
Tao, T. (2012). Topics in random matrix theory, {\it Graduate Studies in Mathematics}, {\bf 132}, American Mathematical Society.
\bibitem{Wendel48}
Wendel, J. G. (1948). Note on the gamma function, {\it Am. Math. Mon.}, {\bf 55}, 563--564. 
\end{thebibliography}


\end{document}